\newcommand{\bmat}[1]{\begin{bmatrix}#1\end{bmatrix}}
\newcommand{\Prob}[1]{\mathbb{P} \left( #1 \right)}
\newcommand{\Exp}[1]{\mathbb{E} \left[ #1 \right]}
\newcommand{\Expx}[1]{\mathbb{E}_{x_0 \sim \mathcal{D}} \left[ #1 \right]}
\newcommand{\Expxt}[1]{\mathbb{E}_{x_0 \sim \mathcal{D}, \omega_0\sim \pi} \left[ #1 \right]}
\newcommand{\1}[1]{\mathbf{1}_{#1}}
\newcommand{\norm}[2]{\left\| #1 \right\|_{#2}}
\newcommand{\inner}[2]{\langle #1,\, #2 \rangle}
\newcommand{\R}{\mathbb{R}}
\newcommand{\tr}[1]{\,\textup{tr} \left( #1 \right)}
\newcommand{\diag}[1]{\textup{diag}\! \left( #1 \right)}
\newcommand{\vect}[1]{\textup{vec}\! \left( #1 \right)}
\newcommand{\srad}[1]{\rho ( #1 )}
\newcommand{\sumtinf}{\sum_{t=0}^\infty}
\newcommand{\sumomega}{\sum_{i\in\Omega}}
\newcommand{\deltaKi}{\Delta K_i}
\newcommand{\X}{\mathbf{X}}
\newcommand{\K}{\mathbb{K}}
\newcommand{\M}{\mathbb{M}}
\newcommand{\Pmat}{\mathcal{P}}
\newcommand{\Lc}[1]{\mathcal{L}( #1 )}
\newcommand{\Li}[1]{\mathcal{L}_i( #1 )}
\newcommand{\T}[1]{\mathcal{T}( #1 )}
\newcommand{\Tj}[1]{\mathcal{T}_j( #1 )}
\newcommand{\E}[1]{\mathcal{E}( #1 )}
\newcommand{\Ei}[1]{\mathcal{E}_i( #1 )}
\newcommand{\Lt}[1]{\mathcal{L}^t( #1 )}
\newcommand{\Tt}[1]{\mathcal{T}^t( #1 )}
\newcommand{\cinv}{-1}
\newcommand{\cT}{T}
\newcommand{\mult}{}
\newtheorem{thm}{Theorem}
\newtheorem{lemma}{Lemma}
\newtheorem{prop}{Proposition}
\newtheorem*{myprb}{Problem: Policy Optimization for MJLS}
\theoremstyle{definition}
\newtheorem{remark}{Remark}
\let\NAT@parse\undefined
\title{\LARGE \bf
 Policy Optimization for Markovian Jump Linear Quadratic Control:\\
 Gradient-Based Methods and Global Convergence
}
\author{
Joao Paulo Jansch-Porto, Bin Hu, and
    Geir E. Dullerud\thanks{J. P. Jansch-Porto is with the Department of Mechanical Science and Engineering,
  University of Illinois at Urbana-Champaign, Email:   \texttt{janschp2@illinois.edu}.}\thanks{B.~Hu is with the  Coordinated Science Laboratory (CSL) and the  Department of Electrical and Computer Engineering,
  University of Illinois at Urbana-Champaign, Email:   \texttt{binhu7@illinois.edu}.}\thanks{G. E. Dullerud is with the  Coordinated Science Laboratory (CSL) and the Department of Mechanical Science and Engineering,
  University of Illinois at Urbana-Champaign, Email:   \texttt{dullerud@illinois.edu}.} 
}
\begin{document}

\maketitle

\begin{abstract}
Recently, policy optimization for control purposes has received renewed attention due to the increasing interest in reinforcement learning. 
In this paper, we investigate the global convergence of gradient-based policy optimization methods for quadratic optimal control of discrete-time Markovian jump linear systems (MJLS). 
First, we study the optimization landscape of direct policy optimization for MJLS, with static state feedback controllers and quadratic performance costs.
Despite the non-convexity of the resultant problem, we are still able to identify several useful properties such as coercivity, gradient dominance, and almost smoothness.
Based on these properties, we show global convergence of three types of policy optimization methods: the gradient descent method; the Gauss-Newton method; and the natural policy gradient method.
We prove that all three methods converge to the optimal state feedback controller for MJLS at a linear rate if initialized at a controller which is mean-square stabilizing. 
Some numerical examples are presented to support the theory.
This work brings new insights for understanding the performance of policy gradient methods on the Markovian jump linear quadratic control problem.

\end{abstract}

\begin{IEEEkeywords}
Markovian jump linear systems, linear quadratic optimal control, nonconvex policy optimization, policy gradient methods, reinforcement learning.
\end{IEEEkeywords}

\section{Introduction}
\label{sec:intro}

Recently, reinforcement learning (RL)~\cite{sutton2018reinforcement} has achieved impressive performance on a class of continuous control problems including locomotion~\cite{schulman2015high} and robot manipulation~\cite{levine2016end}. Policy optimization is the main engine behind these RL applications~\cite{duan2016benchmarking}. Specifically, the policy gradient method~\cite{sutton2000policy} and several related methods including natural policy gradient~\cite{kakade2002natural}, TRPO~\cite{schulman2015trust}, natural AC \cite{peters2008natural}, and PPO~\cite{schulman2017proximal} are among the most popular RL algorithms for continuous control tasks. 
These methods enable flexible policy parameterizations and optimize control performance metrics directly.

Despite the empirical successes of policy optimization methods, how to choose these algorithms for a specific control task is still more of an art than a science~\cite{henderson2018deep, rajeswaran2017towards}.
This motivates a recent research trend focusing on understanding the performances of policy optimization algorithms on simplified benchmarks such as  linear quadratic regulator (LQR)~\cite{pmlr-v80-fazel18a,bu2019lqr,malik2018derivative,tu2018gap,yang2019global,Krauth2019,mohammadi2019convergence,mohammadi2019global,fatkhullin2020optimizing,furieri2020learning,li2019distributed}, linear robust control~\cite{zhang2019policyc,gravell2019learning,zhang2020stability}, and linear control of Lur'e systems~\cite{qu2020combining}.
Notice that even for LQR, directly optimizing over the policy space leads to a non-convex constrained problem.
Nevertheless, one can still prove the global convergence of  policy gradient methods on the LQR problem by exploiting properties such as gradient dominance, almost smoothness, and coercivity~\cite{pmlr-v80-fazel18a,bu2019lqr}. 
This provides a good sanity check for applying policy optimization to more advanced control applications.

Built upon the good progress on understanding policy-based RL for the LQR problem in linear time-invariant systems, this paper moves one step further and studies policy optimization for Markov jump linear systems (MJLS) \cite{costa2006discrete} from a theoretical perspective.
MJLS form an important class of hybrid dynamical systems that find many applications in control~\cite{bar1993estimation,fox2011tsp,hamsa2016cdc, Pavlovic2000LearningSL,sworder1999estimation,varga2013ijrnc} and machine learning~\cite{hu2017unified,hu2019characterizing}.
The research on MJLS has great practical value while in the mean time  also provides many new interesting theoretical questions. 
In the classic LQR problem, one aims to control a linear time-invariant (LTI) system whose state/input matrices do not change over time. On the other hand, the state/input matrices of a Markov jump linear system are functions of a jump parameter that is sampled from an underlying Markov chain. Consequently,
the behaviors of MJLS become very different from  those of LTI systems.  
Controlling unknown MJLS poses many new challenges over  traditional LQR due to the appearance of this Markov jump parameter, and it is the coupling effect between the state/input matrices and the jump parameter distribution causes the main difficulty.
To this end, the optimal control of MJLS provides a meaningful benchmark for further understanding of policy-based RL algorithms.

However, the theoretical properties of policy-based RL methods on discrete-time MJLS have been overlooked in the existing literature~\cite{costa2002monte, beirigo2018online,9029946,vargas2015gradient}.
In this paper, we make an initial step towards bridging this gap. 
Specifically, we develop new convergence theory for direct policy optimization on the quadratic control problem of MJLS. 
First, we study the policy optimization landscape in the MJLS setting.
Despite the non-convexity of the resultant problem, we are still able to identify several useful properties such as coercivity, gradient dominance, and almost smoothness.
Next, we use these identified properties to show global convergence of three types of policy optimization methods: the gradient descent method, the Gauss-Newton method, and the natural policy gradient method.
We prove that all these methods converge to the optimal state feedback controller for MJLS at a linear rate if a stabilizing initial controller is used.  Finally, numerical results are provided to support our theory.
 
This paper expands on the initial results published by the authors in a conference paper~\cite{joaoACC}, and has made significant extensions in analyzing the gradient descent method in the MJLS setting. 
Our work serves as an initial step toward understanding the theoretical aspects of policy-based RL methods for MJLS control.

\section{Background and Problem Formulation}
\label{sec:background}

\subsection{Notation}\label{sec:notation}
We denote the set of real numbers by \(\R\).
Let \(A\) be a matrix, then we use the notation \(A^T\), \( \|A\| \),  \(\tr{A}\), \(\sigma_{\min}(A)\), and \(\rho(A) \) to denote its transpose, maximal singular value, trace, minimum singular value, and spectral radius, respectively. 
Given matrices \(\{D_i\}_{i = 1}^m\),  let \( \diag{D_1, \ldots, D_m}\)  denote the  block diagonal matrix whose $(i,i)$-th block is $D_i$. 
The Kronecker product  of matrices $A$ and $B$ is denoted as $A\otimes B$. We use
$\textup{vec}(A)$ to denote the vectorization of matrix $A$.
We indicate when a symmetric matrix $Z$ is positive definite or positive semidefinite matrices by \(Z\succ 0\) and \(Z \succeq 0\), respectively.
Given a function $f$, we use $df$ to denote its total derivative~\cite{abraham2012manifolds}.


We now introduce some specific matrix spaces and  notation motivated from the MJLS literature~\cite{costa2006discrete}.
Let $\M^N_{n\times m}$ denote the space made up of all $N$-tuples of real matrices $V = (V_1, \ldots, V_N)$ with $V_i \in \R^{n\times m}, i \in \mathbb{N}$.
For simplicity, we write $\M^N$ in place of $\M^N_{n\times m}$ when the dimensions $n$ and $m$ are clear from context.
For $V = (V_1, \ldots, V_N) \in \M^N$, we define
\begin{align*}
&\|V\|_1 \coloneqq \sumomega \|V_i\|, \quad \|V\|_2^2 \coloneqq \sum_{i = 1}^{N} \tr{V_i^T V_i}, \\
&\|V\|_{\max} \coloneqq \max_{i= 1,\ldots,N}\|V_i\|, \quad \Lambda_{\min}(V) \coloneqq \min_{i= 1,\ldots,N} \sigma_{min}(V_i).
\end{align*}
Clearly, we have
$\|V\|_{\max} \leq \|V\|_1 \leq \|V\|_2$.
For $V, S \in \M^N$, their inner product is defined as 
\begin{equation*}
\inner{V}{S} \coloneqq \sum_{i = 1}^{N} \tr{V_i^T S_i}
\end{equation*}
Notice both $V$ and $S$ are sequences of matrices.
It is also convenient to define $V+S \coloneqq (V_1+ S_1, \ldots, V_N+ S_N)$, $V\mult S \coloneqq (V_1 S_1, \ldots, V_N S_N)$, $V^{\cT} \coloneqq (V_1^{T}, \ldots, V_N^{T})$, and $V^{\cinv} \coloneqq (V_1^{-1}, \ldots, V_N^{-1})$.
We say that $V \succ S$ if $V_i - S_i \succ 0 $ for $i = 1, \ldots, N$. 

\subsection{Markovian Jump Linear Quadratic  Control}
A Markovian jump linear system (MJLS) is governed by the following discrete-time state-space model
\begin{equation} \label{eq:ltv}
x_{t+1} = A_{\omega(t)} x_t + B_{\omega(t)} u_t
\end{equation}
where \( x_t \in \R^d\) is the system state, and \(u_t \in \R^k \) corresponds to the control action.
The initial state \(x_0\) is assumed to have a distribution \(\mathcal{D}\).
The system matrices \( A_{\omega(t)} \in \R^{d\times d}\) and \(B_{\omega(t)} \in \R^{d\times k} \) depend on the switching parameter $\omega(t)$, which takes values on \( \Omega\coloneqq\{1, \ldots, N_s \} \). 
We will denote \(A = (A_1, \ldots, A_{N_s}) \in \M_{d\times d}^{N_s}\) and \(B = (B_1, \ldots, B_{N_s}) \in \M_{d\times k}^{N_s}\).

The jump parameter \(\{\omega(t)\}_{t=0}^\infty\) is assumed to form a time-homogeneous Markov chain whose transition probability is given as
\begin{equation}\label{eq:prob}
p_{ij} = \Prob{\omega(t+1) = j | \omega(t) = i}.
\end{equation}
Let $\Pmat$ denote the probability transition matrix whose $(i,j)$-th entry is $p_{ij}$.
The initial distribution of $\omega(0)$ is given by $\pi = \bmat{\pi_1\! &\! \!\cdots\!\! & \!\pi_{N_s}}^T$.
Obviously, we have \(p_{ij} \geq 0\), \(\sum_{j=1}^{N_s} p_{ij} =1\), and \( \sum_{i\in\Omega} \pi_i = 1 \).  
We further assume that system~\eqref{eq:ltv} 
is mean-square stabilizable\footnote{The mean square stability of MJLS is reviewed in sequel.}.

In this paper, we focus on the quadratic optimal control problem whose objective is to choose the control actions \(\{u_t\}_{t=0}^\infty\) to minimize the following cost function
\begin{equation} \label{eq:switched_cost}
C = \Expxt{\sumtinf x_t^T Q_{\omega(t)} x_t + u_t^T R_{\omega(t)} u_t}.
\end{equation}
For simplicity,  it is assumed that \(Q = (Q_1, \ldots, Q_{N_s})\succ 0\), \(R = (R_1, \ldots, R_{N_s})\succ 0\),  $\pi_i > 0$, and  \(\Expx{x_0 x_0^T} \succ 0\).
The assumptions on $\pi$ and $\Expx{x_0 x_0^T}$ indicate that there is a chance of starting from any mode $i$ and the covariance of the initial state is full rank.
These assumptions can be somehow informally thought as the persistently excitation condition in the system identification literature and are quite standard for learning-based control.
The above problem can be viewed as the MJLS counterpart of the standard LQR problem, and hence is termed as the ``MJLS LQR problem."
The optimal controller to this MJLS LQR problem, defined by dynamics~\eqref{eq:ltv}, cost~\eqref{eq:switched_cost}, and switching probabilities~\eqref{eq:prob}, can be computed by solving a system of coupled  Algebraic Riccati Equations (AREs)~\cite{fragoso}.
Notice that it is known that the optimal cost can be achieved by a linear state feedback of the form
\begin{equation}\label{eq:control}
u_t = -K_{\omega(t)} x_t
\end{equation}
with \(  K= (K_1, \ldots, K_{N_s}) \in \M_{k \times d}^{N_s} \).
Combining the linear policy~\eqref{eq:control} with~\eqref{eq:ltv}, we obtain the closed-loop dynamics:
\begin{equation}\label{eq:cl}
x_{t+1} = \left(A_{\omega(t)} - B_{\omega(t)}K_{\omega(t)} \right)x_t  = \varGamma_{\omega(t)} x_t.
\end{equation}
with $\varGamma = (\varGamma_1, \ldots, \varGamma_{N_s})\in\M_{d\times d}^{N_s}$.
Note that using this formulation, we can write the cost~\eqref{eq:switched_cost} as
\begin{equation*}
C = \Expxt{\sumtinf x_t^T \left( Q_{\omega(t)} + K_{\omega(t)}^T R_{\omega(t)} K_{\omega(t)}  \right) x_t}.
\end{equation*}

Now we briefly review how to solve the above MJLS LQR problem.
First, we define the operator $\mathcal{E}:\M_{d\times d}^{N_s}\rightarrow \M_{d\times d}^{N_s}$ as $\E{V} \coloneqq (\mathcal{E}_1(V), \ldots, \mathcal{E}_{N_s}(V))$ where $V=(V_1,\ldots, V_{N_s})\in\M_{d\times d}^{N_s}$ and $\Ei{V} \coloneqq \sum_{j = 1}^{N_s} p_{ij} V_j$.
Let \( P = (P_1, \ldots, P_{N_s}) \) be the unique positive definite solution to the following  AREs:
\begin{align}\label{eq:markov_riccati}
P &= Q +  A^{\cT} \mult \E{P} \mult A - A^{\cT} \mult \E{P} \mult B \times \nonumber\\
&\qquad\qquad\qquad\qquad\; \left( R + B^{\cT}\mult  \E{P}\mult  B \right)^{\cinv}\mult  B^{\cT} \mult \E{P}\mult  A.
\end{align}
It can be shown that the optimal controller is given by
\begin{equation}\label{eq:opt_k}
K^* =\left( R + B^{\cT}\mult  \E{P} \mult B \right)^{\cinv} B^{\cT} \mult \E{P} \mult  A.
\end{equation}

Notice that 
the existence of such a controller is guaranteed by the stabilizability assumption.
In this paper, we will revisit the above MJLS LQR problem from a policy optimization perspective.

\begin{remark}
If \({\omega(t)} = {\omega(t+N_s)}\) for all \(t\), then the system is said to be periodic with period \( N_s \).
Linear periodic systems have been widely studied~\cite{Bittanti1991,293179} and are just a special case of MJLS.
If \(N_s = 1\), then the MJLS just becomes a linear time-invariant (LTI) system.
\end{remark}

\subsection{Policy Optimization for LTI Systems}
\label{sec:LQRreview}
Before proceeding to policy optimization of MJLS, here we review policy gradient methods for the quadratic control of  LTI systems~\cite{pmlr-v80-fazel18a}. 
Consider the LTI system $x_{t+1}=A x_t+B u_t$, $A\in\R^{d\times d}$, $B\in\R^{d\times k}$, with an initial state distribution $\mathcal{D}$ and a static state feedback controller $u_t=-Kx_t$.
We adopt a standard quadratic cost function which can be calculated as
\begin{align} \label{eq:lti_cost}
C(K) &= \Expx{ \sum_{t=0}^\infty x_t^T Q x_t + u_t^T R u_t }\nonumber \\
&=\Expx{ \sum_{t=0}^\infty x_t^T (Q+K^T R K) x_t}.
\end{align}
Obviously, the cost in \eqref{eq:lti_cost} can be computed as
$C(K) = \Expx{x_0^T P^{K} x_0}$ where  $P^K$ is the solution to the Lyapunov equation
$P^K = Q + K^T R K + (A - BK)^T P^K (A-BK)$.
It is also well known~\cite{maartensson2009gradient, pmlr-v80-fazel18a} that the gradient of~\eqref{eq:lti_cost} with respect to \(K\)  can be calculated as
\begin{equation*}
\nabla C(K) = 2 \left(  \left( R + B^T P^K B\right) K - B^T P^K A \right) \Sigma_K,
\end{equation*}
where $\Sigma_K$ is the state correlation matrix, i.e. $\Sigma_K  = \Expx{\sum_{t=0}^\infty x_t x_t^T}$. 
Based on this gradient formula, one can optimize \eqref{eq:lti_cost} using the policy gradient method $K'\leftarrow K-\eta \nabla C(K)$,  the Gauss-Newton method $K'\leftarrow K-\eta (R+B^T P^K B)^{-1} \nabla C(K)\Sigma_K^{-1}$, or the natural policy gradient method $K'\leftarrow \bar{K}-\eta \nabla C(K) \Sigma_K^{-1}$.
One advantage of these gradient-based methods is that they can be implemented in a model-free manner. More explanations for these  methods can be found in~\cite{pmlr-v80-fazel18a}.

In \cite{pmlr-v80-fazel18a}, it is shown that there exists a unique $K^*$ such that $\nabla C(K^*)=0$ if $\Expx{ x_0 x_0^T}$ is full rank.  
In addition, all the above methods are shown to converge to $K^*$ linearly if a stabilizing initial policy is used.

\subsection{Problem Setup: Policy Optimization for MJLS}
In this section, we reformulate the MJLS LQR problem as a policy optimization problem. Since we know the optimal cost for the MJLS LQR problem can be achieved by a linear state feedback controller, it is reasonable to  restrict the policy search within the class of linear state feedback policies. Specifically, we can set $K = (K_1, \ldots, K_{N_s})$,
where each of the components is the feedback gain of
the corresponding mode. With this notation, we consider
the following policy optimization problem whose decision
variable is $K$.

\begin{myprb}
\begin{align*}
\text{minimize:} &\quad \text{cost } C( K),\text{ given in~\eqref{eq:switched_cost}}\\
\null \text{subject to:} &\quad \text{state dynamics, given in~\eqref{eq:ltv}} \\
        &\quad \text{control actions, given in~\eqref{eq:control}}\\
        &\quad \text{transition probabilities, given in~\eqref{eq:prob}}\\
   & \quad \text{stability constraint, } K \text{ stabilizing \eqref{eq:ltv} in the}\\
   & \qquad \text{mean square sense.}
\end{align*}
\end{myprb}

When $N_s=1$, the above problem reduces to the policy optimization for LTI systems \cite{pmlr-v80-fazel18a}.
We want to emphasize that the above problem is indeed a constrained optimization problem. 
Recall that given $K$, the resultant closed-loop MJLS~\eqref{eq:cl} is mean square stable (MSS) if for any initial condition $x_0\in\R^d$ and $\omega(0)\in\Omega$, one has
$\Exp{x_t x_t^T} \rightarrow 0$ as $t\rightarrow \infty$~\cite{costa2006discrete}. 
Since it is assumed \(\Expx{x_0 x_0^T} \succ 0\), we can trivially apply the well-known equivalence between mean square stability and stochastic stability for MJLS~\cite{costa2006discrete} to show that $C(K)$ is finite if and only if $K$ stabilizes the closed-loop dynamics in the mean square sense.
Therefore,
the feasible set of the above policy optimization problem consists of all $K$ stabilizing the closed-loop dynamics \eqref{eq:cl} in the mean square sense. For simplicity, we denote this feasible set as $\K$.
For $K \in \K$, $C(K)$ can be calculated as
\begin{align} \label{eq:markov_cost}
C(K) &=  \Expxt{x_0^T P_{\omega(0)}^{K} x_0}\nonumber \\
&= \Expx{x_0^T \left( \sum_{i\in\Omega } \pi_i P_i^{{K}} \right) x_0},
\end{align}
where $P^K = (P^K_1, \ldots, P^K_{N_s}) \in\M_{d\times d}^{N_s}$ and each $P_i^K$ is solved via the following coupled Lyapunov equations:
\begin{equation}\label{eq:lyap_markov}
P_i^K = Q_i + K_i^T R_i K_i + \left( A_i - B_i K_i \right)^T \Ei{P^K} \left( A_i - B_i K_i \right).
\end{equation}

The goal for policy optimization is to apply iterative gradient-based methods to search for the cost-minimizing element $K^*$ within the feasible set $\K$.
A fundamental question is how to check whether $K\in \K$ for any given $K$.
There are several ways to do this, and we give a brief review here.
We need to introduce a few operators which are standard in the MJLS literature.
Specifically, for any $V\in\M_{d\times d}^{N_s}$, we define 
$\T{V} = (\mathcal{T}_1(V), \ldots, \mathcal{T}_{N_s}(V)) \in \M_{d\times d}^{N_s}$, where $\Tj{V}$ is computed as 
\begin{align*}
\Tj{V} &\coloneqq \sumomega p_{ij} (A_i - B_i K_i) V_i (A_i - B_i K_i)^T.
\end{align*}
Recall that  $\Ei{V} \coloneqq \sum_{j = 1}^{N_s} p_{ij} V_j$.
We can also define $\Lc{V} = (\mathcal{L}_1(V), \ldots, \mathcal{L}_{N_s}(V))\in \M_{d \times d}^{N_s}$, where $\Li{V}$ is given as
\begin{align*}
\Li{V} &\coloneqq (A_i - B_i K_i)^T \Ei{V} (A_i - B_i K_i).
\end{align*}
The following property of $\mathcal{E}_i$ is quite useful
\begin{align}\label{eq:Ebound}
\|\Ei{V}\|  \leq \sum_{j\in\Omega} p_{ij} \|V_i\| \leq \|V\|_{\max} \left( \sum_{j\in\Omega} p_{ij}\! \right)=\|V\|_{\max}.
\end{align}
It is also easy to check that both $\mathcal{T}$ and $\mathcal{L}$ are Hermitian and positive operators.
From~\cite{costa2006discrete}, we also know $\mathcal{T}$ is the adjoint operator of $\mathcal{L}$. The operator $\mathcal{T}$ is useful in describing the covariance propagation of the MJLS \eqref{eq:cl}. Specifically, if we define $X(t)=\left(X_1(t), \ldots, X_{N_s}(t)\right)$ with \(X_i(t) \coloneqq \Exp{x_t x_t^T \1{ \omega(t) = i}}\), then we have
 $X(t+1) = \T{X(t)}$.
 In addition, we know $\sum_{t=0}^\infty X(t)$ exists if $K\in \K$. We denote this limit as $\X^K$ and we have  
 \begin{align}\label{eq:XK}
 \X^K = \sum_{t = 0}^{\infty} \Tt{X(0)}.
 \end{align}
 The operator $\mathcal{L}$ is useful for value computation, since we have
 $P^{{K}} = \Lc{P^{{K}}} + Q + K^{\cT} \mult  R \mult K$ (or equivalently $P^{K} = \sum_{t = 0}^{\infty} \Lt{Q+K^{\cT} \mult  R\mult  K}$) for any $K\in \K$. Also notice $\mathcal{L}$ is actually a linear operator and has a matrix representation $\mathcal{A}\coloneqq\diag{\Gamma_i^T\otimes \Gamma_i^T}(\Pmat \otimes I_{N_s^2})$ where $\Gamma_i=A_i-B_i K_i$ (see Proposition 3.4 in \cite{costa2006discrete} for more details). 
 Now we are ready to present the following well-known result which can be used to check whether $K$ is in $\K$ or not.


\begin{prop}[\cite{costa2006discrete}]\label{prop:mss}
The following assertions are equivalent:
\begin{enumerate}
    \item System~\eqref{eq:cl} is MSS.
    \item $\rho(\mathcal{A}) < 1$.
    \item For any $S\in\M_{n\times n}^{N_s}$, $S\succ 0$, there exists a unique $V\in \M_{n\times n}^{N_s}$, $V\succ0$, such that $V - \T{V} = S$.
    \item There exists $V\succ 0\in \M_{n\times n}^{N_s}$ such that  $V - \T{V} \succ 0$.
\end{enumerate}
The results above also hold when replacing $\mathcal{T}$ by $\mathcal{L}$.
\end{prop}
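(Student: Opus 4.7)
The plan is to prove the cycle $(1)\iff(2)\Longrightarrow(3)\Longrightarrow(4)\Longrightarrow(2)$; the parallel statements with $\mathcal{L}$ in place of $\mathcal{T}$ will then follow because $\mathcal{L}$ is the adjoint of $\mathcal{T}$ (so their matrix representations are transposes with equal spectral radius) and $\mathcal{L}$ is likewise a positive operator, so each step transfers verbatim.

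For $(1)\iff(2)$, I would vectorize the recursion $X(t+1)=\T{X(t)}$ into $\vect{X(t+1)}=\mathcal{A}^T\vect{X(t)}$, invoking that $\mathcal{T}$ is the adjoint of $\mathcal{L}$ and $\mathcal{L}$ has matrix representation $\mathcal{A}$; the spectral radius of the transition is therefore $\rho(\mathcal{A})$. Since $\Exp{x_t x_t^T}=\sum_{i\in\Omega}X_i(t)$ and the deterministic choices of $(x_0,\omega(0))$ generate a spanning set of admissible $X(0)$ values, MSS reduces to $(\mathcal{A}^T)^t\to 0$, i.e., $\rho(\mathcal{A})<1$. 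For $(2)\Longrightarrow(3)$, the bound $\rho(\mathcal{A})<1$ makes $I-\mathcal{T}$ invertible, so the generalized Lyapunov equation admits the unique solution $V=\sum_{t=0}^\infty\Tt{S}$; positivity of $\mathcal{T}$ together with $S\succ 0$ yields $V\succeq S\succ 0$. The step $(3)\Longrightarrow(4)$ is immediate by choosing any $S\succ 0$ and reading off $V-\T{V}=S\succ 0$.

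The main obstacle is $(4)\Longrightarrow(2)$, where a uniform contraction factor must be extracted. Given $V\succ 0$ with $V-\T{V}\succ 0$, a compactness argument on the generalized Rayleigh quotient produces $\alpha\in(0,1)$ such that $V-\T{V}\succeq(1-\alpha)V$, equivalently $\T{V}\preceq\alpha V$. Iterating using the positivity of $\mathcal{T}$ yields $\Tt{V}\preceq\alpha^t V$ for all $t$. For an arbitrary positive semidefinite $Y\in\M_{n\times n}^{N_s}$, since $V\succ 0$ one can choose $c>0$ with $Y\preceq cV$, whence $\Tt{Y}\preceq c\alpha^t V\to 0$; because PSD tuples span $\M_{n\times n}^{N_s}$ over $\mathbb{R}$, this forces $(\mathcal{A}^T)^t\to 0$ as an operator, so $\rho(\mathcal{A})<1$. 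The delicate part is really just isolating such an $\alpha$ uniformly over directions in the PSD cone; a heavier but more conceptual alternative would invoke a Perron--Frobenius-type theorem for positive operators, but the compactness route keeps the argument self-contained.
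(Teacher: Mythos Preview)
The paper does not prove this proposition at all; it is quoted from \cite{costa2006discrete} and used as a black box. So there is no ``paper's own proof'' to compare against. Your outline is essentially the classical argument found in that reference, and the cycle $(1)\iff(2)\Rightarrow(3)\Rightarrow(4)\Rightarrow(2)$ together with the adjoint remark for $\mathcal{L}$ is the right architecture.

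There is one genuine slip in your $(4)\Rightarrow(2)$ step: the claim that ``PSD tuples span $\M_{n\times n}^{N_s}$ over $\mathbb{R}$'' is false as stated --- real PSD matrices span only the \emph{symmetric} matrices, not all of $\R^{n\times n}$. Your contraction argument therefore establishes $\mathcal{T}^t(Y)\to 0$ only for symmetric $Y$, which by itself does not immediately give $\rho(\mathcal{A})<1$ for the full operator $\mathcal{A}$ acting on $\R^{N_s n^2}$. The gap is easily closed, but it needs an extra sentence: either (i) complexify and observe that Hermitian PSD tuples span the real space of Hermitian tuples, and every complex matrix decomposes as $H_1+iH_2$ with $H_1,H_2$ Hermitian, so $\mathcal{T}^t\to 0$ on all of $\C^{n\times n}$; or (ii) use the explicit form of $\mathcal{T}^t$ as a nonnegative sum of maps $V_i\mapsto G V_i G^T$ to bound $\|\mathcal{T}^t(M)\|_{\max}\le \|M\|_{\max}\sum_j\tr{\mathcal{T}^t_j(\mathcal{I})}$, which you have already shown tends to zero. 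Either route completes the argument; just do not leave the spanning claim as written.
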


Based on the above result, a few basic properties of $\K$ can be obtained.
Clearly, we have $\K\coloneqq \{ K \in \M_{k\times d}^{N_s}: \rho(\mathcal{A}) < 1  \}$. Since $\rho(\mathcal{A})$ is a continuous function of $K$, we know $\K$ is an open set and $\K^c$ is a closed set.
The boundary of the set $\K$ can also be formally specified as $\partial \K\coloneqq \{ K\in \M_{k\times d}^{N_s} : \rho(\mathcal{A}) = 1  \}$.

Finally, it is worth mentioning that both $\mathcal{L}$ and $\mathcal{T}$ depend on $K$. Occasionally, we will use the notation $\mathcal{L}^K$ and $\mathcal{T}^K$ when there is a need to emphasize the dependence of these operators on $K$.

\section{Optimization Landscape and Cost Properties}
\label{sec:pg}
In this section, we study the optimization landscape of the MJLS LQR problem and identify several useful properties of $C(K)$.
First, we present an explicit formula for the policy gradient \(\nabla C(K)\).
Notice that $K$ is a tuple of real matrices and hence we have $\nabla C(K) \in \M_{k\times d}^{N_s}$. 

\begin{lemma} \label{lemma:policy_grad}
Suppose $K\in \K$.
Then the cost ~\eqref{eq:markov_cost} is continuously differentiable with respect to ${K}$, and
 the gradient $\nabla C({K})$ can be calculated as
\begin{equation}\label{eq:exact_grad}
\nabla C({K}) = 2 L^K \mult \X^K
\end{equation}
where $L^K = (L_1^K, \ldots, L_{N_s}^K) \in \M_{k\times d}^{N_s}$ and $L_i^K$ is given by
\begin{equation}
\label{eq:Ldef}
L_i^K = \left(R_i + B^T_i \Ei{P^K} B_i \right) K_i - B^T_i \Ei{P^K} A_i.
\end{equation}
Moreover, $\X^K$ in the above gradient formula is given by \eqref{eq:XK}.
\end{lemma}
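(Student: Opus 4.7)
The plan is to derive the gradient via a first-order perturbation analysis of the coupled Lyapunov equations~\eqref{eq:lyap_markov}. First observe that for $K\in\K$, Proposition~\ref{prop:mss} gives $\rho(\mathcal{L}^K)<1$, so $I-\mathcal{L}^K$ is invertible and
\[ P^K \;=\; \sum_{t=0}^{\infty}(\mathcal{L}^K)^t\bigl(Q + K^{\cT}\mult R\mult K\bigr) \]
converges absolutely and depends smoothly on $K$. Combined with the representation $C(K) = \inner{X(0)}{P^K}$, where $X_i(0) = \pi_i\,\Expx{x_0 x_0^{T}}$, this already establishes continuous differentiability of $C$ on the open set $\K$.

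To identify the gradient, I would perturb $K\mapsto K+\deltaK$ with $\deltaK$ small enough that $K+\deltaK\in\K$, and set $P' = P^{K+\deltaK}$, $\Delta P = P' - P^K$, and $\Gamma'_i = \Gamma_i - B_i\deltaKi$. Expanding the Lyapunov equation for $P'_i$ and subtracting the one for $P^K_i$, one uses the telescoping
\[ \Gamma'^{T}_i\Ei{P'}\Gamma'_i - \Gamma_i^{T}\Ei{P^K}\Gamma_i = \mathcal{L}^K_i(\Delta P) + \bigl(\Gamma'^{T}_i\Ei{P'}\Gamma'_i - \Gamma_i^{T}\Ei{P'}\Gamma_i\bigr) \]
to isolate the operator $\mathcal{L}^K$ acting on $\Delta P$. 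Replacing $\Ei{P'}$ by $\Ei{P^K}$ wherever it already multiplies a factor of $\deltaKi$ costs only $o(\|\deltaK\|)$, and the remaining first-order terms collapse to $\deltaKi^{\cT} L_i^K + (L_i^K)^{\cT}\deltaKi$ once the algebraic identity $R_i K_i - B_i^{T}\Ei{P^K}\Gamma_i = L_i^K$ from~\eqref{eq:Ldef} is verified. Inverting $I-\mathcal{L}^K$ then yields
\[ \Delta P = \sum_{t=0}^{\infty}(\mathcal{L}^K)^t\bigl(\deltaK^{\cT}\mult L^K + (L^K)^{\cT}\mult \deltaK\bigr) + o(\|\deltaK\|). \]

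Finally, I would take the inner product with $X(0)$. Since $C(K+\deltaK) - C(K) = \inner{X(0)}{\Delta P}$ and $\mathcal{T}^K$ is the adjoint of $\mathcal{L}^K$, one can push each power $(\mathcal{L}^K)^t$ onto $X(0)$ and sum the resulting geometric series using the definition~\eqref{eq:XK} of $\X^K$ to obtain
\[ C(K+\deltaK) - C(K) = \inner{\X^K}{\deltaK^{\cT}\mult L^K + (L^K)^{\cT}\mult \deltaK} + o(\|\deltaK\|). \]
A short trace-cyclicity calculation together with the symmetry of each $\X^K_i$ (inherited from $X_i(0)\succeq 0$ and preservation by $\mathcal{T}^K$) makes the two summands equal, each reducing to $\sumomega \tr{\deltaKi^{\cT} L_i^K \X^K_i} = \inner{\deltaK}{L^K\mult\X^K}$, from which I read off $\nabla C(K) = 2\,L^K\mult \X^K$ as claimed. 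The main technical obstacle is making the $o(\|\deltaK\|)$ bookkeeping uniform: I would first establish a local Lipschitz estimate $\|\Delta P\|_{\max} \le c_K\,\|\deltaK\|_{\max}$ on a small neighborhood of $K$, obtained from the Neumann series for $(I-\mathcal{L}^K)^{-1}$ together with continuity of $K\mapsto\mathcal{L}^K$ and openness of $\K$, after which every quadratic-in-perturbation residual piece (terms like $\Ei{\Delta P}\,\deltaKi$ and $\deltaKi^{\cT}R_i\deltaKi$) is automatically of order $o(\|\deltaK\|)$.
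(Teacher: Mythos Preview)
Your proposal is correct and follows essentially the same route as the paper: both differentiate the coupled Lyapunov equation~\eqref{eq:lyap_markov}, invert $I-\mathcal{L}^K$ to solve for the first-order variation of $P^K$, pair the result with $X(0)$, and use that $\mathcal{T}^K$ is the adjoint of $\mathcal{L}^K$ to produce $\X^K$. The only cosmetic difference is that the paper obtains continuous differentiability via the implicit function theorem applied to $\vect{P^K} = \mathcal{A}\,\vect{P^K} + \vect{Q+K^{\cT}\mult R\mult K}$ (with $I-\mathcal{A}$ invertible) and then works with the total differential $dP^K$, whereas you argue smoothness through the Neumann series and track an explicit $o(\|\deltaK\|)$ remainder; from that point on the algebra is identical.
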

\begin{proof}
The differentiability of $ C( K)$  can be proved using the implicit function theorem, and this step is similar to the proof of Lemma 3.1 in \cite{rautert1997computational}. 
Specifically, for every $K\in\K$, let $P^K$ be the solution to the equation
\begin{equation*}
P^K = \Lc{P^K} + Q + K^{\cT} \mult R\mult  K
\end{equation*}
Recall that for any $V \in \M_{n\times n}^{N_s}$ we have $\vect{\Lc{V}} = \mathcal{A}\, \vect{V}$ (see Proposition 3.4 in \cite{costa2006discrete}).
Then
\begin{align*}
\vect{P^K} &= \vect{\Lc{P^K}} + \vect{Q + K^{\cT}\mult R\mult K} \\
 &= \mathcal{A}\, \vect{P^K} + \vect{Q + K^{\cT} \mult R \mult K}
\end{align*}
Since $K \in \K$, we know $\srad{\mathcal{A}} < 1$, which in turn implies that $(I - \mathcal{A})$ is invertible.
By the implicit function theorem, the map $K\mapsto P^K$ is continuous, and the function $C({K})$ is continuously differentiable with respect to $K$.

Now we derive the gradient formula by modifying the total derivative arguments in~\cite{rautert1997computational, maartensson2009gradient}.
We can take the total derivative of~\eqref{eq:lyap_markov} to show the following relation for each $i \in \Omega$ 
\begin{align*}
dP_i^{K} &= dK_i^T L_i^K + (L_i^K)^T dK_i + \varGamma_i^T  \left( \sum_{j\in\Omega} p_{ij} dP_j^{{K}} \right) \varGamma_i\\
	&= dK_i^T L_i^K + (L_i^K)^T dK_i + \varGamma_i^T \Ei{dP^K} \varGamma_i.
\end{align*}
Therefore, the above equation can be compactly rewritten as
\begin{align*}
    dP^K=dK^{\cT}\mult L^K + (L^K)^{\cT} \mult dK+\Lc{dP^K}
\end{align*}
which is equivalent to 
\begin{align*}
dP^K=\sum_{t=0}^\infty \mathcal{L}^t(dK^{\cT}\mult L^K + (L^K)^{\cT} \mult dK).
\end{align*}
Notice that we can rewrite the cost~\eqref{eq:markov_cost} as
\begin{equation}\label{eq:cost_inner}
C(K) = \inner{P^K}{X(0)}
\end{equation}
Therefore, we can take total derivative of \eqref{eq:cost_inner} and show
\begin{align*}
dC(K) = \inner{\sum_{t=0}^\infty \mathcal{L}^t(dK^{\cT}\mult L^K + (L^K)^{\cT} \mult dK)}{X(0)}
\end{align*}
Since $\mathcal{T}$ is the adjoint operator of $\mathcal{L}$, we have
\begin{align*}
dC(K)     &= \inner{(dK^{\cT}\mult L^K + (L^K)^{\cT} \mult dK)}{\sum_{t=0}^\infty \mathcal{T}^t(X(0))} \\
    &= \inner{(dK^{\cT}\mult L^K + (L^K)^{\cT} \mult dK)}{\X^K}.
\end{align*}
Notice that each block in $\X^K$ is symmetric. We can easily verify the following fact
\begin{align*}
    \inner{dK^{\cT}\mult L^K}{\X^K}=\inner{(L^K)^{\cT} \mult dK}{\X^K}=\inner{L^K \mult \X^K}{dK}.
\end{align*}
Therefore, we have
$dC(K)    =2\inner{L^K\mult \X^K}{dK}$.
This immediately leads to the desired conclusion due to the fact $dC(K) = \inner{\nabla C({K}) }{dK}$.
\end{proof}

We will also need an explicit formula for the Hessian of the cost.
To avoid tensors, we restrict analysis with the quadratic form of the Hessian $\nabla^2C(K)[E, E]$ on a matrix sequence $E \in \M_{k\times d}^{N_s}$.
\begin{lemma}
For $K \in \K$, the Hessian of the MJLS LQR cost $C({K})$ applied to a direction $E \in \M_{k \times d}^{N_s}$ is given by
\begin{align}
\label{eq:hessian}
\nabla^2 C(K) [E, E] &= 2\inner{(R+B^{\cT} \mult \E{P^K}\mult  B)\mult E\mult \X^K}{E} \nonumber \\
&\quad\  - 4\inner{B^{\cT}\mult  \E{(P^K)'[E]}{\mult} \varGamma\mult \X^K}{E}
\end{align}
where
\begin{align}
(P^K)'[E] = \sumtinf \mathcal{L}^t \big( E^{\cT} \mult L^K + (L^K)^{\cT} \mult E \big)
\end{align}
\end{lemma}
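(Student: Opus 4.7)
The plan is to differentiate the Lyapunov recursion \eqref{eq:lyap_markov} twice along a ray $K(t)=K+tE$ in the feasible set $\K$, then invert the resulting recursion using $\sum_{t=0}^\infty\mathcal{L}^t$, and finally pass to $\X^K$ via the $\mathcal{L}/\mathcal{T}$ adjointness already used in Lemma~\ref{lemma:policy_grad}. The starting observation is that, since $X(0)$ is independent of $K$, \eqref{eq:cost_inner} gives
\begin{equation*}
\nabla^2 C(K)[E,E]=\left.\frac{d^2}{dt^2}\right|_{t=0}\inner{P(t)}{X(0)}=\inner{\ddot P(0)}{X(0)},
\end{equation*}
so everything reduces to computing $\ddot P(0)$.

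First I would set $\Gamma_i(t)=A_i-B_i(K_i+tE_i)$, whose $t$-derivative is the constant $\dot\Gamma_i=-B_iE_i$, and differentiate the Lyapunov identity
\begin{equation*}
P_i(t)=Q_i+(K_i+tE_i)^T R_i(K_i+tE_i)+\Gamma_i(t)^T\Ei{P(t)}\Gamma_i(t)
\end{equation*}
once in $t$. After collecting and using the identity $L_i^K=R_iK_i-B_i^T\Ei{P^K}\Gamma_i$ (which follows directly from \eqref{eq:Ldef}), this reproduces the first-order formula $(P^K)'[E]_i=E_i^TL_i^K+(L_i^K)^TE_i+\Gamma_i^T\Ei{(P^K)'[E]}\Gamma_i$ already used in the proof of Lemma~\ref{lemma:policy_grad}, which is a useful sanity check for the bookkeeping.

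The heart of the argument is the second differentiation. Differentiating $\dot P_i(t)$ once more and evaluating at $t=0$, using $\ddot\Gamma_i=0$ and $\dot\Gamma_i=-B_iE_i$, I expect to obtain, after gathering symmetric pairs,
\begin{equation*}
\ddot P_i(0)=2E_i^T(R_i+B_i^T\Ei{P^K}B_i)E_i-2E_i^TB_i^T\Ei{(P^K)'[E]}\Gamma_i-2\Gamma_i^T\Ei{(P^K)'[E]}B_iE_i+\Gamma_i^T\Ei{\ddot P(0)}\Gamma_i.
\end{equation*}
Writing $M=2E^{\cT}\mult(R+B^{\cT}\mult\E{P^K}\mult B)\mult E-2E^{\cT}\mult B^{\cT}\mult\E{(P^K)'[E]}\mult\Gamma-2\Gamma^{\cT}\mult\E{(P^K)'[E]}\mult B\mult E$, this reads $\ddot P(0)=M+\Lc{\ddot P(0)}$, which I would invert (by the same Neumann-series argument that produced \eqref{eq:XK} and the expression for $(P^K)'[E]$) as $\ddot P(0)=\sum_{t=0}^\infty\mathcal{L}^t(M)$.

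The final step is to pair with $X(0)$ and use that $\mathcal{T}$ is the adjoint of $\mathcal{L}$, giving
\begin{equation*}
\nabla^2C(K)[E,E]=\inner{\ddot P(0)}{X(0)}=\inner{M}{\sum_{t=0}^\infty\mathcal{T}^t(X(0))}=\inner{M}{\X^K}.
\end{equation*}
Then cyclic-trace and block-symmetry identities (the blocks of $P^K$, $(P^K)'[E]$ and $\X^K$ are all symmetric) let me rewrite $\inner{E^{\cT}\mult(R+B^{\cT}\mult\E{P^K}\mult B)\mult E}{\X^K}=\inner{(R+B^{\cT}\mult\E{P^K}\mult B)\mult E\mult\X^K}{E}$ and to show that the two cross-terms involving $(P^K)'[E]$ are equal, hence combine into a single term with coefficient $-4$. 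The expected main obstacle is purely clerical: keeping track of the transposes and signs in the expanded second derivative and then verifying, by $\tr(ZE_i^T)=\tr(E_iZ^T)$ together with the symmetry of $\Ei{(P^K)'[E]}$, that $\inner{E^{\cT}\mult B^{\cT}\mult\E{(P^K)'[E]}\mult\Gamma}{\X^K}$ and $\inner{\Gamma^{\cT}\mult\E{(P^K)'[E]}\mult B\mult E}{\X^K}$ both equal $\inner{B^{\cT}\mult\E{(P^K)'[E]}\mult\Gamma\mult\X^K}{E}$, producing the stated formula \eqref{eq:hessian}.
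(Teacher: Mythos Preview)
Your proposal is correct and follows essentially the same approach as the paper: both compute $\nabla^2 C(K)[E,E]=\inner{\ddot P(0)}{X(0)}$ by differentiating the Lyapunov recursion twice along $K+tE$, invert the resulting fixed-point equation via $\sum_{t\ge 0}\mathcal{L}^t$, apply the $\mathcal{L}/\mathcal{T}$ adjointness to pass to $\X^K$, and then use trace cyclicity and block symmetry to collapse the two cross terms into the single $-4$ term. Your expression for $M$ and the paper's expression for $S$ coincide (up to what appears to be a missing transpose on one $\varGamma$ in the paper's display of $S$), so there is no substantive difference.
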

\begin{proof}
Recall that $\varGamma_i \coloneqq A_i - B_i K_i$ with $\varGamma = (\varGamma_1, \ldots, \varGamma_{N_s})$.
Applying the Taylor series expansion about $E$ \cite{dattorro2010convex}, we have that the quadratic form of the hessian $\nabla^2 C(K)$ on $E$ is given by
\begin{equation}
\nabla^2 C(K) [E, E] = \left.\frac{d^2}{dt^2}\right|_{t=0} C(K + tE).
\end{equation}
Using~\eqref{eq:cost_inner}, we then have
\begin{equation}
\nabla^2 C(K) [E, E]  = \inner{\left.\frac{d^2}{dt^2}\right|_{t=0} P^{K+tE}}{X(0)}
\end{equation}
Denote $(P^K)'[E] \coloneqq \left.\frac{d}{dt}\right|_{t=0} P^{K+tE}$. From Lemma~\ref{lemma:policy_grad} we have
\begin{align*}
(P^K)'[E] = \sumtinf \mathcal{L}^t \Big( &E^{\cT} \mult (R\mult K-B^{\cT} \mult \E{P^K} \mult \varGamma) \nonumber \\
    & + (R\mult K-B^{\cT} \mult \E{P^K} \mult \varGamma)^{\cT} \mult E \Big)
\end{align*}
We can show that
\begin{equation}
\left.\frac{d^2}{dt^2}\right|_{t=0} P^{K+tE} = \sumtinf \mathcal{L}^t \big( S \big)  
\end{equation}
where
\begin{align} \label{eq:S}
S &= - 2\,\big(E^{\cT} \mult B^{\cT} \mult \E{(P^K)'[E]} \mult \varGamma + \varGamma \mult \E{(P^K)'[E]} \mult B \mult E  \big) \nonumber \\
&\qquad + 2\,E^{\cT}\mult (R + B^{\cT} \mult \E{P^K} \mult B) \mult E
\end{align}

Since $\mathcal{T}$ is the adjoint operator of $\mathcal{L}$, we have
\begin{align*}
\nabla^2 C(K) [E, E]  &= \inner{\sumtinf \mathcal{L}^t \big( S \big)}{X(0)} \\
    &= \inner{S}{\sumtinf \mathcal{L}^t \big( X(0) \big)}  = \inner{S}{\X^K}.
\end{align*}
Plugging~\eqref{eq:S} into the above, we get
\begin{align*}
\nabla^2 C(K) [E, E] &= 2\inner{E^{\cT}\mult (R + B^{\cT} \mult \E{P^K} \mult B) \mult E}{\X^K} \\
&\qquad\quad - 2 \inner{E^{\cT} \mult B^{\cT} \mult \E{(P^K)'[E]} \mult \varGamma}{\X^K} \\
&\qquad\quad - 2 \inner{\varGamma \mult \E{(P^K)'[E]} \mult B \mult E}{\X^K} \\
&= 2\inner{E^{\cT}\mult (R + B^{\cT} \mult \E{P^K} \mult B) \mult E}{\X^K} \\
&\qquad\quad -4\inner{E^{\cT} \mult B^{\cT} \mult \E{(P^K)'[E]} \mult \varGamma}{\X^K}
\end{align*}
We can get the desired result by noting that each block in $\X^K$ is symmetric and using the cyclic property of the trace.
\end{proof}

\textbf{Optimization Landscape for MJLS.}
Now we are ready to discuss the optimization landscape for the MJLS LQR problem.
Notice that LTI systems are just a special case of MJLS.
Since  policy optimization for quadratic control of LTI systems is non-convex, the same is true for the MJLS case.
However, from our gradient formula in Lemma \ref{lemma:policy_grad}, we can see that as long as \(\Expx{x_0 x_0^T}\) is 
full rank and $\pi_i>0$ for all $i$, it is necessary that a stationary point given by $\nabla C({K})=0$   satisfies
\begin{equation*}
L_i^K = \left(R_i + B^T_i \Ei{P^K} B_i \right) K_i - B^T_i \Ei{P^K} A_i=0.
\end{equation*}
Substituting the above equation into the coupled Lyapunov equation \eqref{eq:lyap_markov} leads to the global solution ${K}^*$ defined by the coupled Algebraic Riccati Equations~(\ref{eq:markov_riccati}).
Therefore, the only stationary point is the global optimal solution. Overall, the optimization landscape for the MJLS case is quite similar to the classic LQR case if we allow the initial mode to be sufficiently random, i.e. $\pi_i>0$ for all $i$. Based on such similarity, it is reasonable to expect that the local search procedures (e.g. policy gradient) will be able to find the unique global minimum ${K}^*$  for MJLS despite the non-convex nature of the problem. Compared with the LTI case, the characterization of $\K$ is more complicated for MJLS. Hence the main technical issue is how to show gradient-based methods can handle the feasibility constraint ${K}\in \K$ without using projection.

\textbf{Key Properties of the MJLS LQR Cost.} To analyze the performance of gradient-based methods for the MJLS LQR problem, a few key properties of $C(K)$ will be used. 
By assumption, we have $\mu \coloneqq \min_{i\in \Omega}(\pi_i)\, \sigma_{\min}\!\left( \Expx{x_0 x_0^T} \right)>0$. 
Then, we can identify several key properties of $C(K)$ as follows.

\begin{lemma}\label{lemma:coercive}
The cost~\eqref{eq:markov_cost} satisfies the following properties:
\begin{enumerate}
    \item Coercivity: The cost function $C$ is coercive in the sense that for any sequence $\{K^l\}_{l=1}^\infty\subset \K$ we have
\begin{equation*}
    C(K^l) \rightarrow +\infty 
\end{equation*}
if either $\|K^l\|_2 \rightarrow +\infty$, or  $K^l$ converges to an element $K$ in the boundary $\partial \K$.
\item Almost smoothness: Given elements $K, \, K' \in \K$,
the cost function \(C(K)\) defined in~\eqref{eq:switched_cost} satisfies
\begin{align*}
&C(K') - C(K) \\
&\qquad = \sum_{i\in\Omega} \left( -2\tr{\X_i^{K'} \Delta K_i^T L_i^K} \right.\\
&\quad\qquad \left.+ \tr{\X_i^{K'} \Delta K_i^T \left(R_i + B_i^T \Ei{P^K} B_i\right) \Delta K_i} \right)\\
&\qquad = -2\inner{\Delta K^T \mult L^K}{\X^{K'} } + \inner{\Delta K^T\Psi \mult \Delta K}{\mult\X^{K'} },
\end{align*}
where $\Delta K_i = (K_i - K_i')$ and $\Psi \coloneqq R + B^{\cT}\mult \E{P^K}\mult B $.
\item Gradient dominance: Given the optimal policy $K^*$, the following sequence of inequalities holds for any $K\in \K$:
\begin{align*}
C(K) - C(K^*) &\leq  \| \X^{K^*} \|_{\max} \inner{\Psi^{\cinv} \mult L^K}{L^K}\\
&\leq \frac{\| \X^{K^*} \|_{\max}}{\Lambda_{\min}(R)} \|L^K\|_2^2 \\
&\leq \frac{\| \X^{K^*} \|_{\max}}{4\mu^2 \Lambda_{\min}(R)} \|\nabla C(K)\|_2^2.
\end{align*}
\item Compactness of the sublevel sets: The sublevel set defined as $\K_{\alpha} \coloneqq \{ K\in\K : C(K) \leq \alpha \}$ is compact for every $\alpha \ge C(K^*)$.
\item Smoothness on the sublevel sets: For any sublevel set $\K_\alpha$, choose the smoothness constant as 
\begin{align*}
\hspace{-0.1in}L=2\left(\! \|R\|_{\max} + \|B\|^2_{\max} \left(1 + \frac{2\xi}{\|B\|_{\max}}\right)\! \frac{\alpha}{\mu} \right)\! \frac{\alpha}{\Lambda_{\min}(Q)},
\end{align*}
where $\xi$ is calculated as
\begin{equation*}
\xi = \frac{1}{\Lambda_{\min}(Q)} \left( \frac{1 + \|B\|_{\max}^2}{\mu} \alpha + \|R\|_{\max} \right)-1.
\end{equation*}
Then for any $K\in \K_\alpha$, we have
$\|\nabla^2 C(K)\|\le L$.
In addition, for any $(K, K')$ satisfying $tK+(1-t)K'\in \K_\alpha$ $\forall t\in [0,1]$, the following inequality holds
\begin{align}
\label{eq:smooth1}
    C(K')\le C(K)+ \inner{\nabla C(K)}{K'-K}+\frac{L}{2}\|K'-K\|_2^2.
\end{align}
\end{enumerate}
\end{lemma}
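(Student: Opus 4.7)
The plan is to handle items (1)--(5) in turn, with later parts leveraging earlier ones.

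\textbf{Items (1) and (4).} For coercivity, the two divergence mechanisms require separate arguments. When $\|K^l\|_2 \to \infty$, I drop the PSD term $\varGamma^{\cT}\mult \E{P^K}\mult \varGamma$ in the coupled Lyapunov equation~\eqref{eq:lyap_markov} to get $P_i^{K} \succeq Q_i + K_i^T R_i K_i$; inserting into \eqref{eq:markov_cost} together with $X_i(0) = \pi_i\, \Expx{x_0 x_0^T}$ and $\mu>0$ gives $C(K^l) \geq \mu\, \Lambda_{\min}(R)\, \|K^l\|_2^2$, which diverges. When $K^l \to K \in \partial\K$, $\rho(\mathcal{A}) \to 1$ by the very definition of the boundary (Proposition~\ref{prop:mss}), so the unique positive-definite solution $V^l$ of $V^l - \mathcal{L}^{K^l}(V^l) = Q$ must blow up in norm; the block-wise comparison $P^{K^l} \succeq V^l$ then forces $C(K^l) \to \infty$. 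Item (4) follows immediately: $\K_\alpha$ is closed in $\K$ by continuity of $C$, bounded by the first mechanism, and bounded away from $\partial \K$ by the second, hence compact in $\M_{k\times d}^{N_s}$.

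\textbf{Items (2) and (3).} For almost smoothness, I would write $\varGamma' = \varGamma + B\mult \Delta K$ (since $K' = K - \Delta K$) and subtract the coupled Lyapunov equations for $K$ and $K'$. The cross terms regroup through the identity $(L^K)^{\cT} = K^{\cT}\mult R - \varGamma^{\cT}\mult \E{P^K}\mult B$, which follows directly from \eqref{eq:Ldef}, to yield the recursion
\begin{equation*}
Z = \mathcal{L}^{K'}(Z) - \Delta K^{\cT}\mult L^K - (L^K)^{\cT}\mult \Delta K + \Delta K^{\cT}\mult \Psi\mult \Delta K
\end{equation*}
for $Z \coloneqq P^{K'} - P^K$. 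Unrolling gives $Z = \sum_t (\mathcal{L}^{K'})^t(\cdot)$; taking the inner product with $X(0)$ and invoking the $\mathcal{L}/\mathcal{T}$ adjoint relation converts $\sum_t (\mathcal{T}^{K'})^t(X(0))$ into $\X^{K'}$, producing the stated identity. Gradient dominance then follows by specializing to $K' = K^*$, so that $\Delta K = K - K^*$, and treating the resulting expression as a concave quadratic in $\Delta K$. Completing the square block-wise with maximizer $\Delta K_i = \Psi_i^{\cinv} L_i^K$ upper-bounds $C(K) - C(K^*)$ by $\inner{(L^K)^{\cT}\mult \Psi^{\cinv}\mult L^K}{\X^{K^*}}$. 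Pulling out $\|\X^{K^*}\|_{\max}$ (valid since the summands are PSD) and then using $\Psi \succeq R$ produce the first two inequalities. The third uses $\nabla C(K) = 2 L^K \mult \X^K$ combined with $\X^K_i \succeq \pi_i\, \Expx{x_0 x_0^T} \succeq \mu I$, which yields $\|\nabla C(K)\|_2 \geq 2\mu\, \|L^K\|_2$.

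\textbf{Item (5).} For smoothness on sublevel sets, I work from the Hessian formula~\eqref{eq:hessian} and bound every factor in terms of $\alpha$ and the problem data. On $\K_\alpha$, the inequalities $\inner{P^K}{X(0)} \geq \mu\, \|P^K\|_{\max}$ and $C(K) = \inner{Q + K^{\cT}\mult R\mult K}{\X^K} \geq \Lambda_{\min}(Q)\, \|\X^K\|_1$ (the latter uses the adjoint relation to rewrite $C(K)$) give $\|P^K\|_{\max} \leq \alpha/\mu$ and $\|\X^K\|_1 \leq \alpha/\Lambda_{\min}(Q)$. Combined with \eqref{eq:Ebound}, these bound $\|L^K\|_{\max}$ and $\|\varGamma\|_{\max}$; re-running the Lyapunov-resolvent argument on the series defining $(P^K)'[E]$ then yields $\|\E{(P^K)'[E]}\|_{\max} \leq \xi\, \|E\|_2$ with the stated $\xi$. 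Inserting these bounds into \eqref{eq:hessian} produces the stated $L$, and inequality~\eqref{eq:smooth1} follows from the fundamental theorem of calculus applied to $t \mapsto C(K + t(K'-K))$ on $[0,1]$ together with $\|\nabla^2 C\| \leq L$ along the segment. The main obstacle in the whole plan is precisely this smoothness estimate: a linear-in-$\|E\|$ bound on $\|(P^K)'[E]\|$, uniform over $\K_\alpha$, cannot rely on a scalar spectral-radius contraction as in the LTI case, and must instead be threaded through $\|P^K\|_{\max}$ via the coupled Lyapunov relation $P^K = \Lc{P^K} + Q + K^{\cT}\mult R\mult K$ without letting the constants cascade loosely across the $N_s$ modes.
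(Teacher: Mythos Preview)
Your plan matches the paper's proof in all five items: the same lower bound $C(K)\ge \mu\Lambda_{\min}(R)\|K\|_2^2$ and Bolzano--Weierstrass contradiction for coercivity, the same Lyapunov-equation subtraction plus $\mathcal{L}/\mathcal{T}$ adjoint swap for almost smoothness, the same completion of the square for gradient dominance, and the same comparison-principle route through $P^K$ for the Hessian bound. The one slip is in Item~5: the bound you write, $\|\E{(P^K)'[E]}\|_{\max}\le \xi\,\|E\|_2$, is off by a factor of $\|P^K\|_{\max}$. The paper establishes $(P^K)'[E]\preceq \xi\,P^K$ for $\|E\|_2=1$ by showing the driving term $W$ in the Lyapunov recursion satisfies $W\preceq \xi\,Q\preceq \xi(Q+K^{\cT}\mult R\mult K)$ and invoking the comparison principle; the resulting bound $\|(P^K)'[E]\|_{\max}\le \xi\,\|P^K\|_{\max}\le \xi\,\alpha/\mu$ supplies exactly the extra $\alpha/\mu$ factor that produces the stated $L$. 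Since you already flag that the estimate ``must be threaded through $\|P^K\|_{\max}$,'' this is a bookkeeping lapse rather than a gap in strategy.
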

\begin{proof}
To prove Statement 1,  first notice that we have
\begin{align*}
C(K^l) &\geq \Expx{\sumomega \pi_i x_0^T (Q_i + (K_i^l)^T R_i K_i^l) x_0} \\
&\geq \mu \Lambda_{\min}(R) \|K^l\|_2^2.
\end{align*}  
This directly shows that $C(K^l) \rightarrow +\infty$ as $\|K^l\|_2\rightarrow +\infty$.
Next, we assume $K^l\rightarrow K\in \partial \K$. 
Based on Proposition~\ref{prop:mss}, we know that for all $l$, there exists $Y^l \succ 0$ such that
\begin{equation*}
Y^l - \mathcal{L}^{K^l}(Y^l) = Q + (K^l)^{\cT} \mult R \mult K^l
\end{equation*}
where the dependence of $\mathcal{L}$ on $K$ is emphasized by the superscript. 
We now want to show that the sequence $\{Y^l\}$ is unbounded, and will use a contradiction argument.
Suppose that $\{Y^l\}$ is bounded. By the Weierstrass-Bolzano theorem, $\{Y^l\}$ admits a subsequence $\{Y^{l_n}\}_{n=0}^\infty$ which converges to some limit point denoted as $Y$. Clearly, we have $Y\succeq 0$. 
For the same subsequence $\{l_n\}_{n=0}^\infty$, we have $\lim_{n\rightarrow \infty} K^{l_n}=K\in \partial \K$.
For all $l_n$, we still have
\begin{equation*}
Y^{l_n} - \mathcal{L}^{K^{l_n}}(Y^{l_n}) = Q + (K^{l_n})^{\cT} \mult R \mult K^{l_n}
\end{equation*}
Now  letting $n$ go to $\infty$, by continuity, leads to the equation
 $Y - \mathcal{L}^{K}(Y) = Q + K^{\cT} \mult R \mult K$. Since  $Q \succ 0$, $R\succ 0$, and $\mathcal{L}^{K}(Y) \succeq0 $, we conclude $Y\succ 0$.
By Proposition~\ref{prop:mss}, we have $K\in \K$, and this contradicts the fact that $K\in \partial \K$.
Therefore, $\{Y^l\}$ must be unbounded. Since $Y^l$ is positive definite, we can further conclude that $\{\tr{Y^l}\}$ is unbounded and $C(K^l) \rightarrow \infty$. 
This completes the proof of Statement~1.

Next, we prove Statement 2. Recall that we have \( \varGamma_i = A_i - B_i K_i \). To simplify the notation, we denote \( \varGamma_i' \coloneqq A_i - B_i K_i' \). 
By definition, we have
\begin{align}\label{eq:Cdiff}
C(K') - C(K)&= \inner{P^{K'}}{X(0)} - \inner{P^K}{X(0)} \nonumber \\
	&=\inner{P^{K'} - P^K}{X(0)}
\end{align}
Now we develop a formula for $(P_i^{{K}'}\! - P_i^{{K}})$.
Based on~\eqref{eq:lyap_markov}, we have $P_i^{K'} = (\varGamma_i')^T \Ei{P^{K'}} \varGamma_i' + Q_i + (K_i')^T R_i K_i'$.  Using this, we can directly show
\begin{align*}
P_i^{K'} - P_i^{K} &= (\varGamma_i')^T \Ei{P^{K'}} \varGamma_i' + Q_i + (K_i')^T R_i K_i' - P_i^{K} \\
 &= (\varGamma_i')^T \left( \Ei{P^{K'}} - \Ei{P^K} \right) (\varGamma_i')+ Q_i \\
&  \quad\; + (\varGamma_i')^T \Ei{P^K} \varGamma' + (K')^T R K' - P_i^{K}\\
 &= (\varGamma_i')^T \left( \Ei{P^{K'} - P_i^K} \right) \varGamma_i' \\
&  \quad\; + (K_i - K_i') (R_i + B_i^T \Ei{P^K} B_i)(K_i - K_i')\\
&  \quad\; - (K_i - K_i')^T\left(R_i K_i - B_i^T \Ei{P^K} \varGamma_i\right) \\
&  \quad\; - \left(R_i K_i - B_i^T \Ei{P^K} \varGamma_i\right)^T(K_i - K_i') 
\end{align*}
Since $K'\in \K$, the above coupled Lyapunov equations can be directly solved as
\begin{align*}
P^{K'} - P^{K} &=\! \sum_{t = 0}^\infty (\mathcal{L}^{K'})^t \left( \Delta K^{\cT} \mult (R + B^{\cT} \mult \E{P^K}\mult B) \mult\Delta K \right. \\
&\qquad \qquad \qquad \left.+  \Delta K^{\cT}\mult L^K + (L^K)^{\cT} \mult \Delta K \right)
\end{align*}
Here the notation $\mathcal{L}^{K'}$ emphasizes that this is the operator associated with $K'$.
Now we can prove Statement 2 by substituting the above formula into \eqref{eq:Cdiff} and applying the fact that $\mathcal{T}^{K'}$ is the adjoint operator of $\mathcal{L}^{K'}$.

To prove Statement 3, we will make use of the almost smoothness condition. We can rewrite the almost smoothness condition and complete the squares as follows
\begin{align*}
&C(K')-C(K)\\
&\qquad=\inner{- \Delta K^T L^K -(L^K)^T \Delta K+ \Delta K^T \mult \Psi \mult \Delta K}{ \X^{K'} } \\
&\qquad= \inner{\left(-\Delta K +\Psi^{\cinv} \mult L^K\right)^T\Psi \mult \left(-\Delta K \!+\Psi^{\cinv} \mult L^K \right)}{ \mult \X^{K'}} \\
&\qquad\qquad - \inner{(L^K)^T\Psi^{\cinv} \mult L^K }{ \mult \X^{K'}}
\end{align*}
We know $\left(-\Delta K +\Psi^{\cinv} \mult L^K\right)^T\Psi \mult \left(-\Delta K \!+\Psi^{\cinv} \mult L^K \right)\succeq 0$ and $\X^{K'}\succeq 0$. Hence we have 
\begin{align*}
    \inner{\left(-\Delta K +\Psi^{\cinv} \mult L^K\right)^T\Psi \mult \left(-\Delta K \!+\Psi^{\cinv} \mult L^K \right)}{ \mult \X^{K'}}\ge 0,
\end{align*}
which leads to the following inequality for any $K'\in\K$:
\begin{align*}
C(K')-C(K)\geq - \inner{(L^K)^T\Psi^{\cinv} \mult L^K }{ \mult \X^{K'}}
\end{align*}
Then we can set $K'=K^*$ to show
\begin{align*}
C(K) - C(K^*) &\leq \inner{(L^K)^T\mult\Psi^{\cinv} \mult L^K}{\X^{K^*}}\\
&\leq \| \X^{K^*} \|_{\max} \inner{\Psi^{\cinv} \mult L^K}{L^K} \\
&\leq \frac{\| \X^{K^*} \|_{\max}}{\Lambda_{\min}(R)} \|L^K\|_2^2 \\
&= \frac{\| \X^{K^*} \|_{\max}}{4\Lambda_{\min}(R)} \|\nabla C(K) \mult (X^K)^{\cinv}\|_2^2\\
&\leq \frac{\| \X^{K^*} \|_{\max}}{4\Lambda_{\min}(\X^K)^2\Lambda_{\min}(R)} \|\nabla C(K)\|_2^2 \\
&\leq \frac{\| \X^{K^*} \|_{\max}}{4\mu^2 \Lambda_{\min}(R)} \|\nabla C(K)\|_2^2
\end{align*}
This leads to the desired conclusion in Statement 3.

Statement 4 can be proved using  the continuity and coercivity of $C(K)$.
With the coercive property in place, we can continuously extend the function domain from $\K$ to $\M_{k\times d}^{N_s}$ by allowing $\infty$ as a function value.
Based on Proposition 11.12 in \cite{bauschke2011convex}, we know that
$\K_\alpha$ is bounded for any finite $\alpha$. Since $C(K)$ is continuous on $\K$, the set $\K_\alpha$ is also closed. 
Hence Statement 4 holds as desired.

Finally, to prove Statement 5, we only need to bound the norm of $\nabla^2 C(K)$. Then the desired conclusion follows by applying the mean value theorem. Since $\nabla^2 C(K)$ is self-adjoint, its operator norm can be characterized as
\begin{align*}
\|\nabla^2 C(K)\| = \sup_{\|E\|_2 = 1} |\nabla^2 C(K)[E,E] |
\end{align*}
Based on the Hessian formula \eqref{eq:hessian}, we have
\begin{align}
\label{eq:keybound1}
\begin{split}
&\sup_{\|E\|_2 = 1} |\nabla^2 C(K)[E,E] | \\
&\;\qquad\qquad\qquad \leq 2 \sup_{\|E\|_2 = 1} |\inner{(R+B^{\cT}\mult \E{P^K}\mult B)\mult E \mult \X^K}{E}| \\
&\quad\qquad\qquad\qquad\qquad  + 4\sup_{\|E\|_2 = 1} |\inner{B^{\cT} \mult \E{(P^K)'}\mult \varGamma \mult \X^K}{E}|
\end{split}
\end{align}
Now we only need to provide upper bounds for the two terms on the right side of the above inequality. For simplicity,
we denote $q_1\coloneqq\sup_{\|E\|_2 = 1} |\inner{(R+B^{\cT}\mult \E{P^K}\mult B)\mult E \mult \X^K}{E}|$ and $q_2\coloneqq\sup_{\|E\|_2 = 1} |\inner{B^{\cT} \mult \E{(P^K)'}\mult \varGamma \mult \X^K}{E}|$. As a matter of fact, $q_1$ and $q_2$ can be bounded as follows
\begin{align}
\label{eq:q1}
    q_1&\le \left(\|R\|_{\max} + \|B\|^2_{\max}  \frac{C(K)}{\mu} \right) \frac{C(K)}{\Lambda_{\min} (Q)}\\
    \label{eq:q2}
    q_2&\le\frac{\xi \|B\|_{\max}C(K)^2}{\mu\Lambda_{\min}(Q)}
\end{align}
The proofs of~\eqref{eq:q1} and~\eqref{eq:q2} are tedious and hence are deferred to the appendix for readability.
Now we are ready to prove the $L$-smoothness of $C(K)$ within the set $\K_\alpha$.
Notice $C(K)\le \alpha$ for any $K\in \K_\alpha$. Hence we can combine \eqref{eq:q1} and \eqref{eq:q2}  to show $2q_1+4q_2\le L$ where $L$ is given in Statement 5. Based on the mean value theorem, this leads to the desired conclusion. It is worth emphasizing that~\eqref{eq:smooth1} only holds when the line segment between $K$ and $K'$ is in $\K_\alpha$. Since $\K_\alpha$ is non-convex in general, it is possible that there exists $K,K'\in \K_\alpha$ such that~\eqref{eq:smooth1} does not hold. 
\end{proof}

Now we briefly explain the importance of the above properties.
When applying an iterative optimization method to search for $K^*$, two issues need to be addressed and our techniques will heavily rely on the above cost properties.
\begin{enumerate}
    \item Feasibility: One has to ensure that the iterates generated by the optimization method always stay in the non-convex feasible set $\K$. The coercivity implies that the function $C(K)$ serves as a barrier function on $\K$. Based on the coercivity and the compactness of the sublevel set, one can show that the decrease of the cost ensures the next iterate to stay inside $\K$.
    \item Convergence: After ensuring the feasibility, one next needs to show that the iterates generated by the optimization method converge to $K^*$.  The smoothness and gradient dominance properties will play a key role in the convergence proof when there is an absence of convexity.
\end{enumerate}

 In the next section, we will present three gradient-based optimization methods for the MJLS LQR problem and provide global convergence guarantees.

\section{Algorithms and Convergence}
\label{sec:discrete_flow}

In Section \ref{sec:LQRreview}, we have reviewed three optimization methods (namely the policy gradient method, the Gauss-Newton method, and the natural gradient method) for the LQR problem. In this section, we will consider these algorithms in the MJLS LQR setting and provide new global convergence guarantees. For the readers who are interested in the (continuous-time) ODE limits of these methods, we present some relevant results in the appendix.

\subsection{Policy Gradient Method}
The gradient descent method is arguably the simplest optimization algorithm.
In the MJLS LQR setting, 
the gradient descent method iterates as
\begin{equation}\label{eq:gd}
K^{n+1}= K^n-\eta \nabla C(K^n),
\end{equation}
where $K^0$ is required to be in $\K$.
As mentioned before, we first need to ensure that the iterates generated by \eqref{eq:gd} are always in $\K$.
Consider the one-step policy gradient update $K'\leftarrow K-2\eta  L^K \mult \X^K$.
We will use the coercivity of $C(K)$ and the compactness of $\K_\alpha$ to show that  for every $K\in \K$, we can choose some fixed  $\eta$ such that
$K'$ will also be in $\K$.

\begin{lemma}\label{lemma:gd_stability}
Suppose $K\in \K_\alpha$ and $K'=K-\eta \nabla C(K)$. Set $L$ as described in Statement 5 of Lemma~\ref{lemma:coercive}. If $\eta\le \frac{1}{L}$, then we have $K'\in \K_\alpha\subset \K$ and
\begin{align}
\label{eq:key1}
    C(K')\le  C(K)-\frac{\eta}{2}\|\nabla C(K)\|_2^2.
\end{align}
\end{lemma}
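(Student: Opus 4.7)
The plan is to view the gradient step as a continuous path $K(t) = K - t\eta\nabla C(K)$ for $t\in[0,1]$ and to show by a bootstrapping/continuity argument that this entire path stays inside $\K_\alpha$. Once that is established, the smoothness bound of Statement 5 in Lemma~\ref{lemma:coercive} applies at $t=1$ and directly delivers~\eqref{eq:key1}. The trivial case $\nabla C(K)=0$ gives $K'=K$ and both conclusions immediately, so I would assume throughout that $\nabla C(K)\neq 0$.

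Concretely, I would define the set $S = \{t\in[0,1]\,:\,K(s)\in\K_\alpha\text{ for all }s\in[0,t]\}$. It contains $0$ since $K\in\K_\alpha$, and it is closed in $[0,1]$ because $K(\cdot)$ is continuous and $\K_\alpha$ is compact (hence closed) by Statement 4 of Lemma~\ref{lemma:coercive}. Let $\tau = \sup S$; the goal is to prove $\tau = 1$. The key observation is that whenever $t\in S$, the line segment joining $K$ and $K(t)$ is exactly $\{K(s):s\in[0,t]\}$, which by definition of $S$ lies entirely in $\K_\alpha$. Consequently Statement 5 applies and yields, for every $s\in[0,\tau]$,
\begin{equation*}
C(K(s)) \le C(K) - s\eta\Big(1 - \tfrac{Ls\eta}{2}\Big)\|\nabla C(K)\|_2^2.
\end{equation*}
Using $\eta\le 1/L$ and $s\le 1$ gives $1-\tfrac{Ls\eta}{2}\ge \tfrac{1}{2}$, so $C(K(s))\le C(K)-\tfrac{s\eta}{2}\|\nabla C(K)\|_2^2$.

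To close the argument by contradiction, suppose $\tau<1$. Then $\tau\in S$ by closedness, and by the inequality just derived $C(K(\tau)) < C(K)\le \alpha$ strictly (since $\nabla C(K)\neq 0$ and $\tau>0$). I would establish $\tau>0$ separately: because $\K$ is open (as $\rho(\mathcal{A})$ is a continuous function of $K$) and $g(s)\coloneqq C(K(s))$ is differentiable near $s=0$ with $g'(0)=-\eta\|\nabla C(K)\|_2^2<0$, the path enters $\K_\alpha$ for all small $s>0$. Now, with $C(K(\tau))<\alpha$ strict and $K(\tau)\in\K$, continuity of $C$ on the open set $\K$ together with continuity of $K(\cdot)$ produces $\delta>0$ such that $K(s)\in\K$ and $C(K(s))<\alpha$ for all $s\in[\tau,\tau+\delta]$. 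Hence $\tau+\delta\in S$, contradicting $\tau=\sup S$. Therefore $\tau=1$, so $K'=K(1)\in\K_\alpha$, and evaluating the smoothness inequality at $s=1$ gives exactly $C(K')\le C(K)-\tfrac{\eta}{2}\|\nabla C(K)\|_2^2$.

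The main obstacle is precisely the circular flavor of the argument: the smoothness inequality only holds when the entire straight-line segment is already known to lie in the (non-convex) sublevel set $\K_\alpha$, yet that inclusion is what we want to deduce from smoothness in the first place. The bootstrap through the closed set $S$ and the strict decrease along the path is what breaks this circularity; the coercivity in Statement 1 of Lemma~\ref{lemma:coercive} implicitly supports the argument by guaranteeing that the path cannot escape $\K$ without first violating the cost bound $C(K(s))\le \alpha$, which is ruled out by the smoothness estimate.
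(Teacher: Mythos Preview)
Your proof is correct and follows essentially the same strategy as the paper's---both show that the straight-line segment from $K$ to $K'$ stays in $\K_\alpha$ and then invoke the smoothness bound from Statement~5 of Lemma~\ref{lemma:coercive}---but the mechanisms differ in an interesting way. The paper introduces an auxiliary buffer set $\K_{\alpha+\epsilon}$ on which $\|\nabla^2 C\|\le 1.1L$, uses compactness to obtain a strictly positive distance $\delta$ between $\K_\alpha$ and the complement of $\K_{\alpha+\epsilon}^{\mathrm{o}}$, and then covers the step $\eta$ by finitely many sub-steps of size $\tau=\min\{0.9\delta/\|\nabla C(K)\|,\,1/(11L)\}$, each of which lands back in $\K_\alpha$. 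Your argument instead works directly on $\K_\alpha$ via a closed-set/supremum bootstrap: you exploit openness of $\K$ and the strict descent $g'(0)<0$ to get $\tau>0$, then use the strict inequality $C(K(\tau))<\alpha$ and continuity to push past $\tau$. This avoids the buffer construction and the discrete induction entirely, at the price of a slightly more delicate connectedness argument; the paper's version is more constructive (the sub-step $\tau$ is explicit) but requires the extra $\epsilon$-enlargement and the $1.1L$ slack. Both routes hinge on the same two structural facts---compactness of $\K_\alpha$ (Statement~4) and the Hessian bound on sublevel sets (Statement~5)---so neither is fundamentally more general here.
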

\begin{proof}

We define the interior set  of $\K_\alpha$ as $\K_\alpha^{\mathrm{o}}\coloneqq\{K\in \K:C(K)<\alpha\}$. The complement of $\K_\alpha^{\mathrm{o}}$ is denoted as $(\K_\alpha^{\mathrm{o}})^c$.
Notice $\|\nabla^2 C(K)\|\le L$ for all $K\in \K_\alpha$. By continuity, 
there exists $\epsilon>0$ such that $\|\nabla^2 C(K)\|\le 1.1L$ for all $K\in \K_{\alpha+\epsilon}$.

Clearly $(\K_{\alpha+\epsilon}^{\mathrm{o}})^c$ is a closed set and $(\K_{\alpha+\epsilon}^{\mathrm{o}})^c\cap \K_\alpha=\emptyset$.
Since $\K_\alpha$ is compact, we know the distance between $\K_\alpha$ and $(\K_{\alpha+\epsilon}^{\mathrm{o}})^c$ is strictly positive. We denote this distance as $\delta$. Let us choose $\tau=\min\{0.9\delta/\norm{\nabla C(K)} \,,1/11L\}$. Obviously, the line segment between $K$ and $(K-\tau\nabla C(K))$ is in $\K_{\alpha+\epsilon}$. Notice $\|\nabla^2 C(K)\|\le 1.1L$ for all $K\in \K_{\alpha+\epsilon}$, and hence we have
\begin{align*}
C(K\!-\tau\nabla C(K)) &\le  C(K)+ \inner{\nabla C(K)}{K\!-\tau\nabla C(K)-K}\\
&\qquad+\frac{1.1L}{2}\|K-\tau\nabla C(K)-K\|_2^2
\end{align*}
which leads to 
\begin{align*}
    C(K-\tau\nabla C(K))\le  C(K)+ \left(-\tau+\frac{1.1L\tau^2}{2}\right)\|\nabla C(K)\|_2^2
\end{align*}
As long as  $\tau\le 2/(1.1L)$, we have $-\tau+\frac{1.1L\tau^2}{2}\le 0$ and $C(K-\tau\nabla C(K))\le C(K)$. Hence we have $K-\tau\nabla C(K)\in \K_\alpha$. Actually, it is straightforward to see that the line segment between $K$ and $(K-\tau \nabla C(K))$ is in $\K_\alpha$.

The rest of the proof follows from induction.
We can apply the same argument to show that the line segment between $(K-\tau\nabla C(K))$ and $(K-2\tau\nabla C(K))$ is also in $\K_\alpha$. This means that the line segment between $K$ and $(K-2\tau\nabla C(K))$ is in $\K_\alpha$. Since $\tau>0$, we only need to apply the above argument for finite times and then will be able to show that the line segment between $K$ and $(K-\eta \nabla C(K))$ is in $\K_\alpha$ for any $\eta\le \frac{1}{L}$.\footnote{The argument even works for any $\eta\le \frac{2}{1.1L}$. Since the step size leading to the fastest convergence rate is $\frac{1}{L}$, we state our result only for $\eta\le \frac{1}{L}$.} Since $\|\nabla^2 C(K)\|\le L$ for all $K\in \K_\alpha$, we have
\begin{align*}
    C(K')&\le  C(K)+ \inner{\nabla C(K)}{K'-K}+\frac{L}{2}\|K'-K\|_2^2\\
    &= C(K)+\left(-\eta+\frac{L\eta^2}{2}\right)\|\nabla C(K)\|_2^2\\
    &\le C(K)-\frac{\eta}{2}\|\nabla C(K)\|_2^2,
\end{align*}
where the last step follows from the fact that we have $\eta\le \frac{1}{L}$.
This completes the proof.
\end{proof}

Next, we can combine \eqref{eq:key1} with the gradient dominance property to show that the cost associated with the one-step progress of the gradient descent method is decreasing. This step is quite standard.

\begin{lemma}\label{lemma:one_step_gd}
Suppose $K\in \K_\alpha$ and $K' = K - \eta \nabla C(K)$. Set $L$ as described in Statement 5 of Lemma~\ref{lemma:coercive}.
If \( \eta \leq \frac{1}{L}\),  then the following inequality holds
\begin{align*}
C(K') - C(K^*) \leq \left( 1 - \frac{2\mu^2 \Lambda_{\min}(R)}{\| \X^{K^*} \|_{\max}} \eta \right)\! \left( C(K) - C(K^*) \right)\!.
\end{align*}
\end{lemma}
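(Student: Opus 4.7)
The plan is to combine the two ingredients already assembled immediately before this statement: the one-step descent guarantee from Lemma~\ref{lemma:gd_stability} and the gradient dominance property from Statement~3 of Lemma~\ref{lemma:coercive}. Both hold at $K$ because $K\in\K_\alpha\subset\K$, and the choice $\eta\le 1/L$ (with $L$ the smoothness constant on $\K_\alpha$) is precisely what Lemma~\ref{lemma:gd_stability} requires.

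First, I would invoke Lemma~\ref{lemma:gd_stability} to conclude that $K'\in\K_\alpha$ and
\begin{equation*}
C(K')\le C(K)-\frac{\eta}{2}\,\|\nabla C(K)\|_2^2.
\end{equation*}
Next, I would rearrange the last inequality in Statement~3 of Lemma~\ref{lemma:coercive} into the Polyak--{\L}ojasiewicz form
\begin{equation*}
\|\nabla C(K)\|_2^2 \;\ge\; \frac{4\mu^2\,\Lambda_{\min}(R)}{\|\X^{K^*}\|_{\max}}\bigl(C(K)-C(K^*)\bigr).
\end{equation*}

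Substituting the PL-type lower bound into the descent inequality and then subtracting $C(K^*)$ from both sides yields
\begin{equation*}
C(K')-C(K^*)\le \left(1-\frac{2\mu^2\,\Lambda_{\min}(R)}{\|\X^{K^*}\|_{\max}}\eta\right)\bigl(C(K)-C(K^*)\bigr),
\end{equation*}
which is the claim. No additional estimate is needed because all the heavy lifting (coercivity, almost smoothness, gradient dominance, and the smoothness-on-sublevel-sets argument that certifies feasibility of the update) has been carried out already.

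Since every step is just algebraic recombination of prior results, I do not anticipate a real obstacle. The only subtlety worth flagging is that one must verify the intermediate point $K'$ remains in $\K_\alpha$ before invoking smoothness along the segment $[K,K']$; this is exactly what Lemma~\ref{lemma:gd_stability} guarantees via the induction along the line segment, so the bound $\eta\le 1/L$ cannot be relaxed without redoing that feasibility argument. Everything else is a one-line substitution.
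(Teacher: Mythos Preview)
Your proposal is correct and matches the paper's proof essentially line for line: the paper invokes Lemma~\ref{lemma:gd_stability} to get $K'$ stabilizing and the descent inequality \eqref{eq:key1}, then plugs in the gradient dominance bound from Statement~3 of Lemma~\ref{lemma:coercive} to obtain the contraction. There is nothing to add.
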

\begin{proof}
By  Lemma~\ref{lemma:gd_stability}, we know $K'$ is stabilizing. We can combine~\eqref{eq:key1} with Statement 3 in Lemma~\ref{lemma:coercive} to show
\begin{align*}
    C(K')- C(K)&\le -\frac{\eta}{2}\|\nabla C(K)\|_2^2\\
    &\le -\frac{2\mu^2 \Lambda_{\min}(R)\eta}{\| \X^{K^*} \|_{\max}}\left(C(K)-C(K^*)\right)
\end{align*}
which directly leads to the desired conclusion.
\end{proof}

Now we are ready to prove the global convergence of the policy gradient method \eqref{eq:gd}.

\begin{thm} \label{thm:gd_conv}
Suppose \(K^0\in \K\). Choose $\alpha=C(K^0)$ and set $L$ as described in Statement 5 of Lemma~\ref{lemma:coercive}.
For any step size \( \eta \leq \frac{1}{L}\), the iterations generated by the gradient descent method~\eqref{eq:gd} always stay in $\K$ and converge to the global minimum $K^*$ linearly as follows
\begin{align}
\label{eq:mainConGD}
C(K^n) - C(K^*) &\leq \left( 1 - \frac{2\mu^2 \Lambda_{\min}(R)}{\| \X^{K^*} \|_{\max}} \eta \right)^n \times \nonumber \\
& \qquad \qquad \left(C(K^0) - C(K^*) \right).
\end{align}
\end{thm}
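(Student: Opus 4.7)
The plan is to prove Theorem 1 by a simple induction that chains together the two preceding lemmas (gd\_stability and one\_step\_gd), with the key observation that the choice $\alpha = C(K^0)$ makes the smoothness constant $L$ uniform across all iterations. Since the cost is monotonically nonincreasing along gradient descent, every iterate $K^n$ remains inside the same sublevel set $\K_\alpha$, so the same $L$-smoothness bound, and therefore the same admissible step size $\eta \le 1/L$, applies at every step.

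First, I would verify the base case $n=0$ trivially, and then argue inductively that $K^n \in \K_\alpha$ for every $n$. Assuming $K^n \in \K_\alpha$, Lemma~\ref{lemma:gd_stability} applied with step size $\eta \le 1/L$ gives both $K^{n+1} \in \K_\alpha \subset \K$ and the one-step descent inequality $C(K^{n+1}) \le C(K^n) - (\eta/2)\|\nabla C(K^n)\|_2^2$. In particular $C(K^{n+1}) \le C(K^n) \le \alpha$, closing the induction and ensuring feasibility is preserved for all $n$.

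Next, since $K^n \in \K_\alpha$ at every iteration, Lemma~\ref{lemma:one_step_gd} applies unchanged at every step and yields the one-step contraction
\begin{equation*}
C(K^{n+1}) - C(K^*) \le \left(1 - \frac{2\mu^2 \Lambda_{\min}(R)}{\|\X^{K^*}\|_{\max}} \eta \right)\!\left(C(K^n) - C(K^*)\right).
\end{equation*}
Iterating this inequality from $n$ down to $0$ gives the linear rate claimed in~\eqref{eq:mainConGD}.

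Honestly, there is no real obstacle left: all the nontrivial work (gradient dominance, $L$-smoothness on sublevel sets, and showing that one gradient step with step size $\eta \le 1/L$ keeps the iterate inside $\K_\alpha$ while decreasing the cost) has already been absorbed into Lemmas~\ref{lemma:coercive}, \ref{lemma:gd_stability}, and~\ref{lemma:one_step_gd}. The only subtlety to flag is the uniformity of $L$ across iterations, which follows precisely because $\alpha$ is fixed at $C(K^0)$ and the sequence $\{C(K^n)\}$ is nonincreasing. The contraction factor $1 - 2\mu^2\Lambda_{\min}(R)\eta/\|\X^{K^*}\|_{\max}$ is strictly less than one since $\eta > 0$, $\mu > 0$ (by the assumptions $\pi_i > 0$ and $\Expx{x_0 x_0^T} \succ 0$), and $R \succ 0$, establishing the linear convergence to the global optimum $K^*$.
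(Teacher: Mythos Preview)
Your proposal is correct and follows essentially the same induction argument as the paper: fix $\alpha = C(K^0)$, use Lemma~\ref{lemma:gd_stability} (implicitly via Lemma~\ref{lemma:one_step_gd}) to guarantee $K^n\in\K_\alpha$ for all $n$, and iterate the one-step contraction from Lemma~\ref{lemma:one_step_gd}. The only difference is cosmetic---you invoke Lemma~\ref{lemma:gd_stability} explicitly to close the feasibility induction, whereas the paper leaves that absorbed inside Lemma~\ref{lemma:one_step_gd}.
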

\begin{proof}
We will use an induction argument. Since $\alpha=C(K^0)$, we have $K^0\in \K_\alpha$. By Lemma~\ref{lemma:one_step_gd}, we know~\eqref{eq:mainConGD} holds for $n=1$. Since $C(K^1)\le C(K^0)$,  we have $K^1\in \K_\alpha$. We can apply Lemma \ref{lemma:one_step_gd} again to show \eqref{eq:mainConGD} holds for $n=2$. Now it is obvious that we can repeatedly apply the above argument to show~\eqref{eq:mainConGD} holds for any $n$.
\end{proof}

From the above proof, one can see that without using projection, one can still guarantee the gradient descent method will stay in the feasible set and converge to the global minimum. When the model is unknown, one can directly apply policy-based learning techniques such as zeroth-order optimization methods~\cite{conn2009introduction,nesterov2017random} to estimate the gradient $\nabla C(K)$ from data.
Our result implies that such data-driven methods will work if the gradient is estimated with some reasonable accuracy. A detailed quantification of the sample complexity of such learning methods in the MJLS setting is beyond the scope of our paper and will be investigated in the future.

\subsection{Gauss-Newton Method}
The step size choice of the policy gradient method depends on the parameter $L$ which is typically unknown.
Next, we will consider the Gauss-Newton method whose step size selection is much more straightforward. The Gauss-Newton method iterates as follows
\begin{equation}
\label{eq:gngd}
K^{n+1}= K^n-2\eta (R + B^{\cT}\mult \E{P^{K^n}} \mult B)^{\cinv} \mult L^{K^n},
\end{equation}
where the initial policy $K^0$ is required to be in the set $\K$.

Again, we need to ensure that the iterates generated by \eqref{eq:gngd} are always in the set $\K$.
Consider the one-step Gauss-Newton update $K'\leftarrow K-2\eta (R + B^{\cT}\mult \E{P^{K}} \mult B)^{\cinv} \mult L^K$.
 We need to show
that for every $K\in \K$, we can choose a step size $\eta$ such that
$K'$ will also be stabilizing the closed-loop
dynamics in the mean-square sense. This is formalized as below.

\begin{lemma}\label{lemma:step_stable_GN} 
Suppose \(K\in \K\).
Then the one-step update $K'$ obtained from the Gauss-Newton method~\eqref{eq:gngd} will also be in $\K$ if the step size \(\eta\) satisfies $\eta \leq \frac{1}{2}$.
\end{lemma}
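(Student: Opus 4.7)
The plan is to use $P^K$ itself as a Lyapunov certificate for the closed-loop system under the new gain $K'$, and invoke Statement 4 of Proposition~\ref{prop:mss}. Concretely, I would try to show $P^K - \mathcal{L}^{K'}(P^K) \succ 0$; since $K \in \K$ combined with $Q \succ 0, R \succ 0$ forces $P^K \succ 0$, this would immediately give $K' \in \K$.

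The central computation is to expand $P^K - \mathcal{L}^{K'}(P^K)$ componentwise. Writing $A_i - B_i K_i' = \Gamma_i + B_i \Delta K_i$ with $\Delta K_i = K_i - K_i' = 2\eta H_i^{-1} L_i^K$ and $H_i \coloneqq R_i + B_i^T \mathcal{E}_i(P^K) B_i$, and then subtracting using the Lyapunov identity $P_i^K - \Gamma_i^T \mathcal{E}_i(P^K)\Gamma_i = Q_i + K_i^T R_i K_i$, gives
\begin{align*}
P_i^K - \mathcal{L}_i^{K'}(P^K) &= Q_i + K_i^T R_i K_i - \Gamma_i^T\mathcal{E}_i(P^K)B_i\,\Delta K_i \\
 &\quad - \Delta K_i^T B_i^T\mathcal{E}_i(P^K)\Gamma_i - \Delta K_i^T B_i^T\mathcal{E}_i(P^K)B_i\,\Delta K_i.
\end{align*}
The key identity $B_i^T \mathcal{E}_i(P^K)\Gamma_i = R_i K_i - L_i^K$, which follows directly from the definition of $L_i^K$ in \eqref{eq:Ldef}, lets me re-express the cross terms cleanly. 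After substituting $\Delta K_i = 2\eta H_i^{-1} L_i^K$ and using $B_i^T\mathcal{E}_i(P^K)B_i = H_i - R_i$, the $R_i$-pieces collapse into $(K_i - \Delta K_i)^T R_i (K_i - \Delta K_i) = (K_i')^T R_i K_i'$, and the $L_i^K$-terms combine to $4\eta(1-\eta)(L_i^K)^T H_i^{-1} L_i^K$. The end result should be
\begin{equation*}
P_i^K - \mathcal{L}_i^{K'}(P^K) = Q_i + (K_i')^T R_i K_i' + 4\eta(1-\eta)(L_i^K)^T H_i^{-1} L_i^K.
\end{equation*}

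To finish, for any $\eta \in (0, 1/2]$ the coefficient $4\eta(1-\eta) \ge 0$ and $H_i \succ 0$, so the last term is positive semidefinite; combined with $Q_i \succ 0$, this yields $P^K - \mathcal{L}^{K'}(P^K) \succ 0$. Invoking Statement 4 of Proposition~\ref{prop:mss} with $V = P^K \succ 0$ then certifies $K' \in \K$. The main obstacle I expect is purely bookkeeping: correctly tracking the multiple cross-terms so that the $R$-dependent quantities reassemble into the perfect square $(K_i')^T R_i K_i'$ and the $L_i^K$-quadratic comes out with the specific coefficient $4\eta(1-\eta)$. Once that algebra goes through, positivity is essentially automatic, and it is worth noting that the proof only needs $\eta \in (0,1)$ for feasibility; the tighter bound $\eta \le 1/2$ stated in the lemma is presumably chosen to align with the one-step contraction analysis carried out in the next result.
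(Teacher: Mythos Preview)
Your proposal is correct and follows exactly the paper's approach: use $P^K$ as the Lyapunov certificate and invoke Statement~4 of Proposition~\ref{prop:mss}. The only difference is organizational: the paper stops after isolating the cross term $\Delta K_i^T\bigl(R_iK_i' - B_i^T\mathcal{E}_i(P^K)(A_i-B_iK_i')\bigr) = 2\eta(1-2\eta)(L_i^K)^T H_i^{-1}L_i^K$ and argues that each grouped piece is separately sign-definite (which forces $\eta\le 1/2$), whereas you carry the algebra one step further, absorb the $\Delta K_i^T H_i \Delta K_i$ term, and obtain the combined coefficient $4\eta(1-\eta)$. Your observation that this actually yields feasibility for all $\eta\in(0,1)$ is therefore a genuine (if minor) sharpening of the paper's statement.
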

\begin{proof}
One may modify the proof for Lemma~\ref{lemma:gd_stability} to prove the above result. Here we present an alternative proof based on
 Lyapunov theory. Such a proof will also highlight a key difference between the policy gradient method and the Gauss-Newton method.
The main idea here is that for Gauss-Newton method, the value function at the current
step serves naturally as a Lyapunov function for the next
move due to the positive definiteness of $Q_i$.

Recall from Proposition~\ref{prop:mss} that the controller $K'$ stabilizes~\eqref{eq:cl} in the mean-square sense if and only if there exists \( Y\succ 0 \in \M_{d\times d}^{N_s}\) such that
\begin{equation}
\label{eq:LMI}
\mathcal{L}^{K'}(Y) - Y \prec 0
\end{equation}
We will show that the above condition can be satisfied by setting $Y=P^K$ where $P^K$ solves the coupled MJLS Lyapunov equations $(A_i - B_i K_i)^T \Ei{P^K} (A_i - B_i K_i) + Q_i + K_i^T R_i K_i = P_i^K$ for every $i\in\Omega$. 
Notice that the existence of $P^K$ is guaranteed by the assumption $K\in \K$.
Denote $\Delta K_i \coloneqq K_i-K_i'$. 
The Lyapunov equation for $P^K$ can be rewritten as $(A_i - B_i K_i' - B_i \deltaKi )^T \Ei{P^K} (A_i - B_i K_i' - B_i \deltaKi)+ Q_i + (K_i' + \deltaKi)^T R_i (K_i' + \deltaKi) = P_i^K $. 
From this, we can directly obtain 
\begin{align*}
& (A_i - B_i K_i')^T \Ei{P^K} (A_i - B_i K_i') - P_i^K = \\
&\qquad\qquad\qquad -\left( Q_i + (K_i')^{T}  R_i  K_i'\right) \\
&\qquad\qquad\qquad - \left(\deltaKi^{T} R_i \deltaKi + \deltaKi^{T} B_i^T \Ei{P^K} B_i \deltaKi \right) \\
&\qquad\qquad\qquad - \deltaKi^{T} \left( R_i K_i' - B_i^{T} \Ei{P^K} (A_i - B_i K_i') \right) \\
&\qquad\qquad\qquad - \left( R_i K_i' - B_i^{T} \Ei{P^K} (A_i - B_i K_i') \right)^{T} \deltaKi
\end{align*}
Since \((R, P^K, Q)\) are all positive definite, the sum of the first two terms on the right hand side is negative definite.
We only need the last two terms to be negative semidefinite.
Note that, for the Gauss-Newton method, $\deltaKi = 2 \eta (R_i + B_i^T \Ei{P^K} B_i)^{-1} L_i^K$. We have
\begin{align*}
&\deltaKi^T \left( R_i K_i' - B_i^T \Ei{P^K} (A_i - B_i K_i') \right) \\
&\qquad = \deltaKi^T \left( \left(R_i +  B_i^T \Ei{P^K} B_i\right) K_i' - B_i^T \Ei{P^K} A_i \right) \\
&\qquad= \deltaKi^T \left( - \left(R_i +  B_i^T \Ei{P^K} B_i\right) \deltaKi + L_i^K \right) \\
&\qquad= 2 \eta(1 - 2\eta) (L_i^K)^T\left(R_i +  B_i^T \Ei{P^K} B_i\right)^{-1}  L_i^K
\end{align*}
which is positive semidefinite under the condition $\eta\le \frac{1}{2}$ for all $i\in \Omega$.
\end{proof}

From the above proof, we can clearly see that $P^K$ can be used to construct a Lyapunov function for $K'$
if $\eta\le \frac{1}{2}$. This leads to a novel proof for the stability along
the Gauss-Newton iteration path. This idea may even be extended for problems where the cost is not coercive. For example, a similar idea has been used to show the convergence
properties of policy optimization methods for the mixed
$\mathcal{H}_2/\mathcal{H}_\infty$ state feedback design problem where the cost function may not
blow up to infinity on the boundary of the feasible set \cite{zhang2019policyc}.
It is worth mentioning that the above proof idea does not work for the gradient descent method. For the policy gradient method, the value function at step $n$ cannot be directly used as a Lyapunov function at step $(n+1)$. A similar fact has also been observed for the mixed
$\mathcal{H}_2/\mathcal{H}_\infty$ control problem~\cite{zhang2019policyc}.

Next, we will apply the ``almost smoothness" condition and the gradient dominance condition stated in Lemma \ref{lemma:coercive} to show that cost associated by the one-step progress of the Gauss-Newton method is non-increasing.

\begin{lemma}\label{lemma:one_stepGN}
If $K'= K-2\eta \Psi^{-1}L^K$ with  $\Psi_i \coloneqq (R_i + B_i^T \Ei{P^K} B_i)$ and \(\eta \leq \frac{1}{2}\), then the following inequality holds
\begin{align*}
C(K') - C(K^*) &\leq \left( 1 -  \frac{2\mu}{\| \X^{K^*} \|_{\max}} \eta \right)\left( C(K) - C(K^*) \right).
\end{align*}
\end{lemma}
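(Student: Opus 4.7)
The plan is to prove this by combining the almost smoothness identity (Statement 2 of Lemma~\ref{lemma:coercive}), the gradient dominance inequality (Statement 3 of Lemma~\ref{lemma:coercive}), and a straightforward lower bound on $\mathbf{X}^{K'}$. First, Lemma~\ref{lemma:step_stable_GN} already guarantees that $K'\in \K$ under $\eta\le 1/2$, so $\mathbf{X}^{K'}$ and $P^{K'}$ are well defined and the almost smoothness expression is applicable. With $\Delta K = K - K' = 2\eta\,\Psi^{\cinv}\mult L^K$, the two inner products in Statement 2 collapse into a single scalar expression: the cross term contributes $-4\eta\,\inner{(L^K)^T\Psi^{\cinv}\mult L^K}{\X^{K'}}$, and the quadratic term contributes $+4\eta^2\,\inner{(L^K)^T\Psi^{\cinv}\mult L^K}{\X^{K'}}$, where I use $\Psi^{\cinv}\Psi\Psi^{\cinv}=\Psi^{\cinv}$.

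Combining gives
\begin{equation*}
C(K')-C(K)=-4\eta(1-\eta)\,\inner{(L^K)^T\Psi^{\cinv}\mult L^K}{\X^{K'}}.
\end{equation*}
The step size constraint $\eta\le 1/2$ ensures $4\eta(1-\eta)\ge 2\eta$, and since each $(L_i^K)^T\Psi_i^{\cinv}L_i^K\succeq 0$ and $\X^{K'}_i\succeq 0$, the inner product is nonnegative, so $C(K')-C(K)\le -2\eta\,\inner{(L^K)^T\Psi^{\cinv}\mult L^K}{\X^{K'}}$.

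Next I would lower bound $\X^{K'}$. By definition $X_i(0)=\pi_i\,\Expx{x_0 x_0^T}\succeq \mu I$, and because $\X^{K'}=\sum_{t=0}^\infty (\mathcal{T}^{K'})^t(X(0))\succeq X(0)$ blockwise, every block of $\X^{K'}$ dominates $\mu I$. Using the standard trace inequality $\tr{AB}\ge \mu\tr{A}$ whenever $A\succeq 0$ and $B\succeq \mu I$, applied block by block, gives
\begin{equation*}
\inner{(L^K)^T\Psi^{\cinv}\mult L^K}{\X^{K'}}\;\ge\;\mu\,\inner{\Psi^{\cinv}\mult L^K}{L^K}.
\end{equation*}
Invoking the first line of the gradient dominance chain in Statement 3 of Lemma~\ref{lemma:coercive}, namely $C(K)-C(K^*)\le \|\X^{K^*}\|_{\max}\,\inner{\Psi^{\cinv}\mult L^K}{L^K}$, we can replace the inner product by a lower bound proportional to the suboptimality gap, obtaining
\begin{equation*}
C(K')-C(K)\;\le\;-\,\frac{2\mu\,\eta}{\|\X^{K^*}\|_{\max}}\,\bigl(C(K)-C(K^*)\bigr).
\end{equation*}
Adding $C(K)-C(K^*)$ to both sides yields the claimed linear contraction.

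The computation itself is short; the only real subtlety is the step in which the cross term and the quadratic term telescope so cleanly. This is not an obstacle so much as a point to verify carefully: it relies on the fact that $\Psi$ is symmetric and invertible blockwise (true because each $R_i\succ 0$), so that $\Delta K^{\cT}\mult\Psi\mult\Delta K = 4\eta^2(L^K)^{\cT}\mult\Psi^{\cinv}\mult L^K$ exactly, and on the fact that the left and right Frobenius placements of $L^K$ in Statement 2 give the same inner product against the symmetric $\X^{K'}$. Everything else is bookkeeping with the trace inequality and Statement 3 of Lemma~\ref{lemma:coercive}.
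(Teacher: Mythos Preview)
Your proposal is correct and follows essentially the same route as the paper's own proof: invoke Lemma~\ref{lemma:step_stable_GN} for feasibility, plug $\Delta K = 2\eta\,\Psi^{\cinv}L^K$ into the almost smoothness identity so the two terms combine into $-4\eta(1-\eta)\inner{(L^K)^T\Psi^{\cinv}L^K}{\X^{K'}}$, use $\eta\le 1/2$ to bound this by $-2\eta\inner{(L^K)^T\Psi^{\cinv}L^K}{\X^{K'}}$, then lower bound each block of $\X^{K'}$ by $\mu I$ and finish with the first gradient-dominance inequality in Statement~3. Your justification of $\Lambda_{\min}(\X^{K'})\ge \mu$ via $\X^{K'}\succeq X(0)\succeq \mu\,\mathcal{I}$ is in fact a bit more explicit than the paper's, which simply asserts the bound.
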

\begin{proof}
Based on Lemma~\ref{lemma:step_stable_GN}, we know $K'\in \K$.
Based on Statement 2 (the almost smoothness condition) in Lemma \ref{lemma:coercive},  we have
\begin{align*}
C(K') - C(K) &= -4\eta\inner{ (L^K)^T\Psi^{\cinv} \mult L^K}{ \X^{K'}}\\
&\qquad \qquad \quad +4\eta^2 \inner{(L^K)^T\Psi^{\cinv}\mult L^K}{\X^{K'}}\\
	&\leq -2\eta \inner{(L^K)^T \Psi^{\cinv} \mult  L^K}{ \X^{K'}} \\
	&\leq -2\eta \Lambda_{\min}(\X^{K'}) \inner{\Psi^{\cinv}\mult L^K}{L^K} \\
	&\leq -2\eta \mu \inner{\Psi^{\cinv} \mult L^K}{L^K} \\
	&\leq -\frac{2\eta\mu}{\| \X^{K^*} \|_{\max}} \left(C(K) - C(K^*)\right),
\end{align*}
where the last step follows from Statement 3 in Lemma~\ref{lemma:coercive}.
\end{proof}

We are now ready to present the linear rate bound on the convergence of the Gauss-Newton method.

\begin{thm} \label{thm:gn_conv}
Suppose \(K^0\in \K\).
For any step size $\eta \le \frac{1}{2}$, the iterations generated by
the Gauss-Newton method~\eqref{eq:gngd} always stay in $\K$ and will converge to the global minimum $K^*$ linearly as follows
\begin{align}
\label{eq:mainConGN}
C(K^n) - C(K^*) \le\! \left( 1 -  \frac{2\mu}{\| \X^{K^*} \|_{\max}} \eta\right)^{\!n}\hspace{-0.06in}(C(K^0) - C(K^*) ).
\end{align}
\end{thm}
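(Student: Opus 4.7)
The plan is to prove this theorem by a straightforward induction on $n$, leveraging the two lemmas already established for the Gauss-Newton method: Lemma \ref{lemma:step_stable_GN} (feasibility preservation) and Lemma \ref{lemma:one_stepGN} (one-step contraction). The structural parallel with the proof of Theorem \ref{thm:gd_conv} is close, but here the step-size condition $\eta \le \frac{1}{2}$ is a universal constant and does not depend on a smoothness parameter, so no sublevel-set bookkeeping is required.

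For the base case $n=0$, the bound \eqref{eq:mainConGN} reduces to $C(K^0) - C(K^*) \le C(K^0) - C(K^*)$, which is trivial, and $K^0 \in \K$ by hypothesis. For the inductive step, assume $K^n \in \K$ and that \eqref{eq:mainConGN} holds at index $n$. Since $\eta \le \frac{1}{2}$, Lemma \ref{lemma:step_stable_GN} immediately guarantees that the next iterate $K^{n+1}$ lies in $\K$, so the Gauss-Newton update is well defined at step $n+1$. Applying Lemma \ref{lemma:one_stepGN} with $K = K^n$ and $K' = K^{n+1}$ then yields
\begin{align*}
C(K^{n+1}) - C(K^*) \le \left(1 - \frac{2\mu}{\|\X^{K^*}\|_{\max}}\eta\right)\bigl(C(K^n) - C(K^*)\bigr),
\end{align*}
and composing this bound with the inductive hypothesis produces the desired estimate at index $n+1$, closing the induction.

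There is no substantial obstacle here, since the heavy lifting was completed in the two preceding lemmas: the Lyapunov-based construction that secures feasibility along the Gauss-Newton path, and the combination of the ``almost smoothness'' identity with gradient dominance that yields the per-step contraction factor. The only subtlety worth flagging is that, in contrast to the gradient descent case, one never needs to restrict attention to a sublevel set $\K_\alpha$ to maintain a uniform curvature bound; the Gauss-Newton preconditioner $\Psi^{\cinv}$ plays that role automatically, which is precisely why the admissible step-size range here is given by a universal constant rather than one depending on problem data through a smoothness parameter $L$. As a consequence, the contraction rate $1 - \tfrac{2\mu}{\|\X^{K^*}\|_{\max}}\eta$ inherits the same clean form across all iterations without any need to re-certify smoothness along the trajectory.
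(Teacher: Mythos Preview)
Your proof is correct and follows essentially the same approach as the paper: both arguments apply Lemma~\ref{lemma:one_stepGN} at every step and then conclude by induction. Your explicit invocation of Lemma~\ref{lemma:step_stable_GN} is slightly redundant (it is already used inside the proof of Lemma~\ref{lemma:one_stepGN}), but harmless, and your remarks about the absence of sublevel-set bookkeeping are a correct and useful observation.
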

\begin{proof}
Since Lemma~\ref{lemma:one_stepGN} holds for any $\eta \leq \frac{1}{2}$, we have the following contraction at every step:
\begin{align*}
C({K}^{n+1}) - C({K}^*) \!\leq\! \left(\! 1 -  \frac{2\eta\mu}{\| \X^{K^*} \|_{\max}\!} \right)\! \left( C({K}^n) - C({K}^*) \right)\!.
\end{align*}
Hence, we can obtain the final result using induction.
\end{proof}

From the above result, it is obvious that one can just choose $\eta=\frac{1}{2}$ for the Gauss-Newton method. Actually, such a step size choice leads to the famous policy iteration algorithm in the reinforcement learning literature. One potential drawback of the Gauss-Newton method is that the computation of $\Psi^{-1}L^K$ at every step can be costly.  This motivates the use of the natural policy gradient method.

\subsection{Natural Policy Gradient Method}

The natural policy gradient method avoids the computation of matrix inverse and iterates as follows
\begin{equation}
\label{eq:npgd}
K^{n+1}= K^n-\eta \nabla C(K^n) \mult (\X^{K^n})^{\cinv} = K^n - 2\eta L^{K^n}.
\end{equation}

The proofs for the convergence of the natural policy gradient method are very similar to the Gauss-Newton results presented in the previous section. Consider the one-step natural gradient update $K'\leftarrow K-2\eta L^K$. The following two results state how to choose $\eta$ to ensure $K'\in \K$ and $C(K')\le C(K)$.

\begin{lemma}\label{lemma:step_stable} 
Suppose \(K\in \K\) and $K'\leftarrow K-2\eta L^K$. The one-step natural policy gradient update $K'$  will also in $\K$
if the step size \(\eta\) satisfies 
\begin{equation*}
 \eta \leq \frac{1}{2 \| R + B^{\cT} \mult \E{P^K} \mult B \|_{\max}}.
\end{equation*}
\end{lemma}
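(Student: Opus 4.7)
The plan is to adapt the Lyapunov-based proof of Lemma~\ref{lemma:step_stable_GN} almost verbatim, since the Gauss–Newton and natural gradient updates share the same underlying algebraic structure after a change of scaling. In particular, I would again use $P^K$ (the value matrix tuple associated with the current stabilizing controller) as the Lyapunov certificate for $K'$, and invoke Statement 4 of Proposition~\ref{prop:mss}: it suffices to exhibit $Y \succ 0$ with $\mathcal{L}^{K'}(Y) - Y \prec 0$.

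First I would substitute $K_i = K_i' + \Delta K_i$ with $\Delta K_i = K_i - K_i' = 2\eta L_i^K$ into the Lyapunov equation that defines $P_i^K$, namely $(A_i - B_i K_i)^T \Ei{P^K}(A_i - B_i K_i) + Q_i + K_i^T R_i K_i = P_i^K$, and rearrange to isolate $(A_i - B_i K_i')^T \Ei{P^K}(A_i - B_i K_i') - P_i^K$. This produces exactly the same four-term decomposition that appears in the proof of Lemma~\ref{lemma:step_stable_GN}: a strictly negative definite piece $-(Q_i + (K_i')^T R_i K_i')$, a negative semidefinite piece $-(\Delta K_i^T R_i \Delta K_i + \Delta K_i^T B_i^T \Ei{P^K} B_i \Delta K_i)$, and a pair of cross terms of the form $-\Delta K_i^T (R_i K_i' - B_i^T \Ei{P^K}(A_i - B_i K_i')) + (\text{transpose})$.

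The only piece of the argument that differs is the simplification of the cross term, and this is where the natural gradient step size bound enters. Writing $\Psi_i \coloneqq R_i + B_i^T \Ei{P^K} B_i$, I would use $K_i' = K_i - 2\eta L_i^K$ to compute
\begin{equation*}
R_i K_i' - B_i^T \Ei{P^K}(A_i - B_i K_i') = \Psi_i K_i' - B_i^T \Ei{P^K} A_i = (I - 2\eta \Psi_i) L_i^K,
\end{equation*}
so that with $\Delta K_i = 2\eta L_i^K$ the two cross terms collapse into $-4\eta\, (L_i^K)^T (I - 2\eta \Psi_i) L_i^K$, using symmetry of $\Psi_i$. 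This is negative semidefinite as soon as $I - 2\eta \Psi_i \succeq 0$, i.e.\ $2\eta \|\Psi_i\| \le 1$, for every mode $i \in \Omega$. Taking the maximum over $i$ gives precisely the stated bound $\eta \le \tfrac{1}{2\,\|R + B^{\cT} \mult \E{P^K} \mult B\|_{\max}}$, and the combined right-hand side is then $\prec 0$ thanks to the leading $-(Q_i + (K_i')^T R_i K_i') \prec 0$ term (using $Q \succ 0$). Invoking Proposition~\ref{prop:mss} yields $K' \in \K$.

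I do not anticipate a serious obstacle: the bookkeeping is entirely parallel to the Gauss–Newton case, and the only nuance is handling the cross term without the $\Psi^{-1}$ preconditioner. The minor point to be careful about is the sign and the factor of $2\eta$ coming from $\Delta K_i = 2\eta L_i^K$ (as opposed to $2\eta \Psi_i^{-1} L_i^K$ in Lemma~\ref{lemma:step_stable_GN}), which is why the resulting step-size bound now scales inversely with $\|\Psi\|_{\max}$ rather than being a dimensionless constant $\tfrac{1}{2}$.
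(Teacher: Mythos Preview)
Your proposal is correct and follows essentially the same approach as the paper: both use $P^K$ as the Lyapunov certificate via Proposition~\ref{prop:mss}, reproduce the four-term decomposition from the Gauss--Newton proof, and then simplify the cross term to $2\eta\,(L_i^K)^T(I - 2\eta\Psi_i)L_i^K$ (per term), which is sign-definite precisely when $2\eta\|\Psi_i\|\le 1$ for every $i$. The only cosmetic difference is that you compute $\Psi_i K_i' - B_i^T\Ei{P^K}A_i = (I-2\eta\Psi_i)L_i^K$ in one step, whereas the paper inserts the intermediate line $\Psi_i(K_i - \Delta K_i) - B_i^T\Ei{P^K}A_i$ before arriving at the same expression.
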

\begin{proof}
The proof starts with the same steps as the proof of Lemma~\ref{lemma:step_stable_GN}. 
We will show that the condition~\eqref{eq:LMI} can be met by setting $Y_i=P_i^K$ where $P_i^K$ solves the MJLS Lyapunov equation associated with the controller $K$.
For the natural policy gradient method, we have \(\Delta K \coloneqq 2\eta L^K \).
To show that the last two terms are negative semidefinite, we make the following calculations:
\begin{align*}
&\Delta K_i^T \left( R_i K_i' - B_i^T \Ei{P^K} (A_i - B_i K_i') \right) \\
&= \Delta K_i^T\! \left( (R_i +  B_i^T \Ei{P^K} B_i) K_i' - B_i^T \Ei{P^K} A_i \right) \\
&= \Delta K_i^T\! \left( (R_i +  B_i^T \Ei{P^K} B_i)\! \left(K_i - \Delta K_i \right)\!  -\! B_i^T \Ei{P^K} A_i \right) \\
&= 2\eta (L_i^K)^T \left(L_i^K - 2\eta  \left(R_i +  B_i^T \Ei{P^K} B_i\right) L_i^K \right) \\
&= 2\eta (L_i^K)^T \left(I - 2\eta \left(R_i +  B_i^T \Ei{P^K} B_i\right) \right)  L_i^K
\end{align*}
Clearly, the above term is guaranteed to be positive semidefinite if $\eta$ satisfies
\begin{align*}
 \eta \leq \frac{1}{2\| R_i + B_i^T \Ei{P^K} B_i \|}.
\end{align*}
Lastly, notice
$\| R_i + B_i^T \Ei{P^K} B_i \| \leq \| R + B^{\cT} \mult \E{P^K} \mult B \|_{\max}$ for all $i$.
This leads to the desired conclusion.
\end{proof}

\begin{lemma}\label{lemma:one_step}
Suppose $K\in \K$.
If $K' = K - 2\eta L^{K}$ and   
\begin{equation*}
 \eta \leq \frac{1}{2\| R + B^{\cT} \mult \E{P^K} \mult B \|_{\max}},
\end{equation*}
then the following inequality holds
\begin{align*}
C(K') - C(K^*) &\leq \left( 1 - \frac{ 2\mu \Lambda_{\min}(R)}{\| \X^{K^*} \|_{\max}}\eta \right)\left( C(K) - C(K^*) \right).
\end{align*}
\end{lemma}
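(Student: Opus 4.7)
The plan is to mirror the proof of Lemma~\ref{lemma:one_stepGN} for the Gauss-Newton method, substituting the identity for $\Psi^{\cinv}$ and using the step-size bound to absorb a quadratic term in $\eta$. First, I would invoke Lemma~\ref{lemma:step_stable} to certify that $K'\in \K$ under the stated step-size condition, so that $P^{K'}$ and $\X^{K'}$ are well-defined and the almost-smoothness expansion is applicable.

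Next, I would apply the almost smoothness identity (Statement 2 of Lemma~\ref{lemma:coercive}) with $\Delta K = 2\eta L^K$ to obtain
\begin{align*}
C(K')-C(K) = -4\eta\inner{(L^K)^{\cT}\mult L^K}{\X^{K'}}+4\eta^2\inner{(L^K)^{\cT}\mult \Psi\mult L^K}{\X^{K'}}.
\end{align*}
Since $\Psi_i\preceq \|\Psi\|_{\max} I$ and $\X^{K'}\succeq 0$, the hypothesis $\eta\le \frac{1}{2\|\Psi\|_{\max}}$ yields $4\eta^2\inner{(L^K)^{\cT}\mult\Psi\mult L^K}{\X^{K'}}\le 2\eta\inner{(L^K)^{\cT}\mult L^K}{\X^{K'}}$, so the cross-term dominates and we obtain
\begin{align*}
C(K')-C(K)\le -2\eta\inner{(L^K)^{\cT}\mult L^K}{\X^{K'}}\le -2\eta\Lambda_{\min}(\X^{K'})\,\|L^K\|_2^2\le -2\eta\mu\,\|L^K\|_2^2,
\end{align*}
where the final bound uses $\Lambda_{\min}(\X^{K'})\ge \mu$ (which follows from $\pi_i>0$ and $\Expx{x_0 x_0^T}\succ 0$; this is the same step used implicitly in Lemma~\ref{lemma:one_stepGN}).

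Finally, I would invoke the gradient-dominance inequality (Statement 3 of Lemma~\ref{lemma:coercive}), which gives $\|L^K\|_2^2\ge \frac{\Lambda_{\min}(R)}{\|\X^{K^*}\|_{\max}}\bigl(C(K)-C(K^*)\bigr)$, and substitute into the previous display to conclude
\begin{align*}
C(K')-C(K^*)\le \left(1-\frac{2\mu\,\Lambda_{\min}(R)}{\|\X^{K^*}\|_{\max}}\,\eta\right)\bigl(C(K)-C(K^*)\bigr).
\end{align*}
There is no real obstacle here: the only delicate point is justifying the dominance of the linear term over the quadratic term, which requires the stronger step-size bound $\eta\le \tfrac{1}{2\|\Psi\|_{\max}}$ characteristic of natural gradient (as opposed to $\eta\le \tfrac{1}{2}$ for Gauss-Newton, where $\Psi^{-1}$ already normalizes the direction). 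Everything else is a direct transcription of the Gauss-Newton argument.
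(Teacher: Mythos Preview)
Your proposal is correct and follows essentially the same route as the paper's own proof: invoke Lemma~\ref{lemma:step_stable} for feasibility, expand the almost-smoothness identity with $\Delta K=2\eta L^K$, use $\Psi\preceq \|\Psi\|_{\max}\mathcal{I}$ together with the step-size bound to absorb the quadratic term, then lower-bound $\Lambda_{\min}(\X^{K'})$ by $\mu$ and finish with the gradient-dominance inequality on $\|L^K\|_2^2$. The only cosmetic difference is that the paper writes the inner products as $\inner{L^K}{L^K\mult \X^{K'}}$ rather than $\inner{(L^K)^{\cT}\mult L^K}{\X^{K'}}$, which is the same quantity.
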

\begin{proof}
By Lemma~\ref{lemma:step_stable}, we know $K'\in \K$.
The following bound also holds
\begin{align*}
\inner{(L^K)^T \mult \Psi \mult L^K}{\X^{K'}} \leq  \|\Psi\|_{\max} \inner{L^K}{L^K \mult \X^{K'}} 
\end{align*}
where $\Psi = R + B^{\cT} \mult \E{P^K} \mult B$.
Now we can apply the almost smoothness condition and the gradient dominance condition in Lemma~\ref{lemma:coercive} to show
\begin{align*}
&C(K') - C(K)\\   
&\qquad\qquad\quad = -4\eta \inner{L^K}{L^K \mult \X^{K'}} + 4\eta^2 \inner{\Psi \mult L^K}{L^K \mult \X^{K'}} \\
&\qquad\qquad\quad \le  \left(-4\eta+4\eta^2  \|\Psi\|_{\max}\right) \inner{L^K}{L^K\mult \X^{K'}}\\
&\qquad\qquad\quad \le -2\eta \inner{L^K}{L^K\mult \X^{K'}} \\
&\qquad\qquad\quad \le -2\eta \Lambda_{\min}(\X^{K'}) \|L^K\|^2_2 \\
&\qquad\qquad\quad \le -2\eta \mu \|L^K\|^2_2 \\
&\qquad\qquad\quad \le -\frac{ 2\eta \mu\Lambda_{\min}(R)}{\| \X^{K^*} \|_{\max}} \left(C(K) - C(K^*)\right)
\end{align*}
where the last step follows from Statement 3 in Lemma~\ref{lemma:coercive}. This completes the proof.
\end{proof}

In the above lemmas, the step size depends on $R$, $B$, $P^K$, and $K^n$.
With this in mind, we will fix a constant step size with the help of some bounds in term of the initial cost.
The convergence of the natural policy gradient method is formally given below.

\begin{thm} \label{thm:npg_conv}
Suppose \(K^0\in \K\).
For any step size $\eta \le \frac{1}{2}\left( \| R \|_{\max} + \frac{\| B \|_{\max}^2 C(K^0)}{\mu} \right)^{-1}$,
the iterations generated by the natural policy gradient method~\eqref{eq:npgd} always stay in $\K$ and will converge to the global minimum $K^*$ linearly as follows
\begin{align*}
C(K^n) - C(K^*) \le\! \left(\! 1 - \frac{ 2\mu \Lambda_{\min}(R)}{\| \X^{K^*} \|_{\max}}\eta \right)^{\!n}\!\!\! (C(K^0) - C(K^*)).
\end{align*}
\end{thm}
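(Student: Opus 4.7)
The plan is to mimic the induction structure used in Theorems~\ref{thm:gd_conv} and~\ref{thm:gn_conv}, with the twist that the iterate-dependent step-size condition of Lemma~\ref{lemma:one_step} must be replaced by the iterate-independent bound in terms of $C(K^0)$ stated in the theorem. The base case $K^0\in\K$ is given by assumption. For the inductive step, I assume $K^n\in\K$ and $C(K^n)\le C(K^0)$, and I must verify that the fixed $\eta$ satisfies $\eta\le\tfrac{1}{2}\|R+B^{\cT}\mult\E{P^{K^n}}\mult B\|_{\max}^{-1}$ so that Lemma~\ref{lemma:step_stable} and Lemma~\ref{lemma:one_step} can be invoked at step $n$.

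The key estimate is a uniform upper bound on $\|R+B^{\cT}\mult\E{P^{K^n}}\mult B\|_{\max}$ in terms of $C(K^0)$. I would proceed in three routine steps. First, by the triangle inequality and submultiplicativity,
\begin{equation*}
\|R+B^{\cT}\mult\E{P^{K^n}}\mult B\|_{\max}\le \|R\|_{\max}+\|B\|_{\max}^2\|\E{P^{K^n}}\|_{\max}.
\end{equation*}
Second, the inequality~\eqref{eq:Ebound} gives $\|\E{P^{K^n}}\|_{\max}\le \|P^{K^n}\|_{\max}$. Third, I bound $\|P^{K^n}\|_{\max}$ by $C(K^n)/\mu$: from~\eqref{eq:markov_cost}, for each $i$,
\begin{equation*}
C(K^n)\ge \pi_i\,\Expx{x_0^T P_i^{K^n} x_0}\ge \pi_i\,\sigma_{\min}(\Expx{x_0 x_0^T})\,\|P_i^{K^n}\|\ge \mu\,\|P_i^{K^n}\|,
\end{equation*}
where the middle inequality uses the positive definiteness of $P_i^{K^n}$ so that its trace dominates its operator norm. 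Taking the max over $i$ gives $\|P^{K^n}\|_{\max}\le C(K^n)/\mu\le C(K^0)/\mu$, which chains into the desired uniform upper bound on $\|R+B^{\cT}\mult\E{P^{K^n}}\mult B\|_{\max}$.

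With this bound in hand, the stated step size $\eta$ satisfies the hypothesis of Lemma~\ref{lemma:one_step} for $K=K^n$, so $K^{n+1}\in\K$ and
\begin{equation*}
C(K^{n+1})-C(K^*)\le\left(1-\frac{2\mu\Lambda_{\min}(R)}{\|\X^{K^*}\|_{\max}}\eta\right)(C(K^n)-C(K^*)).
\end{equation*}
Since the contraction factor is in $(0,1)$, we have $C(K^{n+1})\le C(K^n)\le C(K^0)$, which preserves the inductive hypothesis and completes the induction. Iterating the one-step contraction yields the announced linear rate.

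The only nontrivial part is the uniform step-size argument; the rest is routine. The main subtlety is that, unlike in the Gauss-Newton case where the step-size bound $\eta\le\tfrac12$ is intrinsic, here one must use the monotone decrease $C(K^n)\le C(K^0)$ (itself a consequence of Lemma~\ref{lemma:one_step}) to prevent $\|P^{K^n}\|_{\max}$ from blowing up along the trajectory. In other words, feasibility, cost monotonicity, and the admissibility of the fixed step size all have to be established simultaneously inside the induction rather than sequentially, which is exactly what the inductive hypothesis $C(K^n)\le C(K^0)$ is engineered to handle.
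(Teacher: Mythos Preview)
Your proposal is correct and follows essentially the same approach as the paper's proof: both establish the uniform bound $\|P^{K}\|_{\max}\le C(K)/\mu$ (the paper via $C(K)\ge\mu\sum_i\tr{P_i^K}$, you via the equivalent per-mode estimate), combine it with~\eqref{eq:Ebound} to dominate $\|R+B^{\cT}\mult\E{P^K}\mult B\|_{\max}$, and then run the same induction in which the hypothesis $C(K^n)\le C(K^0)$ simultaneously guarantees feasibility and the applicability of Lemma~\ref{lemma:one_step}. There is no substantive difference in strategy or in the key estimates.
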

\begin{proof}
Notice that the following bound holds
\begin{align*}
C(K) 	&\geq \tr{ \sum_{i\in \Omega} \pi_i P_i^{K} } \sigma_{\min}\!\left({\Expx{x_0x_0^T}}\right)\\
	&\geq  \left( \sum_{i\in \Omega} \tr{P_i^{K}} \right)  \min_{i\in\Omega}(\pi_i) \sigma_{\min}\!\left({\Expx{x_0x_0^T}}\right)\\
	& \geq  \mu \| P^K\|_{\max}.
\end{align*}
Therefore, we can apply \eqref{eq:Ebound} to show
\begin{align*}
\|R + B^{\cT} \mult \E{P^K} \mult B\|_{\max} &\leq \|R\|_{\max} + \|B^{\cT} \mult \E{P^K} \mult B\|_{\max}\\
& \leq \|R\|_{\max} + \|B\|^2_{\max} \| P^K\|_{\max} \\
& \leq \|R\|_{\max} + \|B\|^2_{\max} \frac{C(K)}{\mu}.
\end{align*}
which gives an alternative step size bound:
\begin{equation*}
\frac{1}{\| R + B^{\cT} \mult \E{P^K} \mult B \|}_{\max} \geq  \frac{1}{\|R\|_{\max} + \|B\|_{\max}^2\frac{ C(K)}{\mu}}.
\end{equation*}

The rest of the proof can be completed by induction: For the first step we have \(C(K^1) \leq C(K^0)\), which is due to Lemma~\ref{lemma:one_step}.
The proof proceeds by arguing that Lemma~\ref{lemma:one_step} can be applied at every step.
If it were the case that \(C(K^n) \leq C(K^0)\), then
\begin{align*}
\eta &\leq \frac{1}{2\left(\| R \|_{\max} + \frac{\|B \|_{\max}^2 C(K^0)}{\mu}\right)} \\
& \leq  \frac{1}{2\left(\| R \|_{\max} + \frac{\| B \|_{\max}^2 C(K^n)}{\mu}\right)} \\
& \leq \frac{1}{2\| R + B^{\cT} \mult \E{P^{K^n}} \mult B \|_{\max}}
\end{align*}
and so we can apply Lemma~\ref{lemma:one_step} to get
\begin{align*}
C(K^{n+1}) - C(K^*) &\leq \left( 1 - \frac{ 2\eta\mu \Lambda_{\min}(R)}{\| \X^{K^*} \|_{\max}} \right) \times \\
&\qquad \qquad \left( C(K^n) - C(K^*) \right).
\end{align*}
Obviously, now we have \(C(K^{n+1}) \leq C(K^0)\) and can repeat the above argument for the next step.
Therefore, the desired conclusion follows from induction.
\end{proof}

The natural policy gradient method can also be implemented in a model-free manner. When the model is unknown, one can just estimate $\nabla C(K)$ and $\X^K$ from the sampled trajectories. There exist some numerical evidence showing that the natural policy gradient method is significantly faster than the policy gradient method when applied to learn optimal control of large-scale MJLS with unknown parameters~\cite{janschporto2020policy}.

\section{Numerical Experiments}
In this section, we present numerical results to support our proposed theory.
First, we considered a system with 100 states, 20 inputs, and 100 modes.
The system matrices \(A\) and \(B\) were generated using the function \texttt{drss} in MATLAB in order to guarantee that the system would have finite cost with \(K^0 = 0\).
The probability transition matrix \(\Pmat\) was sampled from a Dirichlet Process \( \text{Dir}(99\cdot I_{100} + 1) \).
We also assumed that we had equal probability of starting in any initial mode and that $x_0$ is sampled from a uniform distribution, $\mathcal{U}[-0.5,0.5]$, hence $\mathbb{E}[x_0^T x_0] = \frac{1}{12} I$.
For simplicity we set \( Q_i = I \) and \( R_i = I \) for all \( i \in \Omega \).


\begin{figure}[h!]
\centering
\includegraphics[width=\columnwidth]{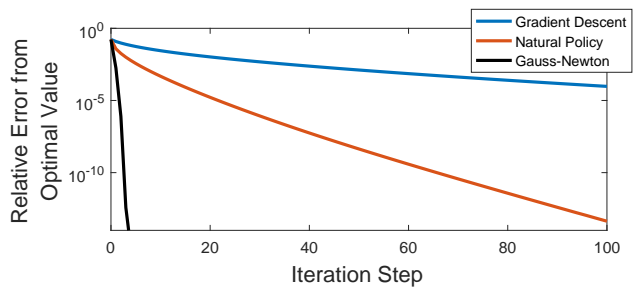}
\caption{Relative error from optimal cost for controllers computed using the proposed policy optimization methods.}
\label{fig:exact_conv}
\end{figure}

In Figure~\ref{fig:exact_conv}, we plotted the relative error of the cost function from the optimal value for all three methods.
The relative error was computed as $\left|\frac{C(K^t) - C(K^*)}{C(K^*)}\right|$.
We can see that all three methods converge to the optimal solution. As expected,
the Gauss-Newton method converges much faster than the other two methods. The step size of the natural policy gradient
method and the policy gradient method depend on various system parameters, and requires
some tuning efforts for each different problem instance.



Next, we considered a random system with 1000 states, 100 inputs, and 10 modes.
Similar results have been obtained and shown in  Figure~\ref{fig:exact_conv_1000}.
Here, despite having a larger number of states and inputs than the first system, we observe faster convergence rates due to the fact that we have a much smaller number of modes.

\begin{figure}[h!]
\centering
\includegraphics[width=\columnwidth]{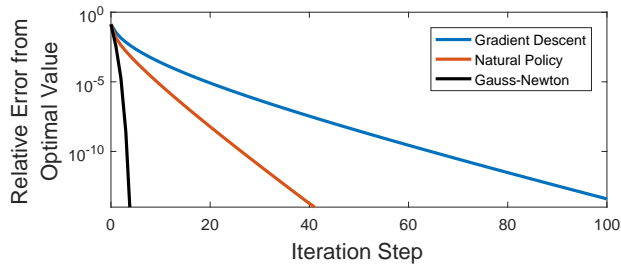}
\caption{The convergence of the Gauss-Newton, Natural policy gradient, and gradient methods on a random system with 1000 states, 100 inputs, and 10 modes.}
\label{fig:exact_conv_1000}
\end{figure}

Actually, we have run the proposed optimization methods for many other cases with different state dimensions. The observed trend is quite similar, and we omit the details of these results due to the space constraint.
Overall, the numerical results are consistent with our theory.

\section{Conclusion and Future Work}
In this paper, we have studied the global convergence of policy gradient methods for the quadratic control of Markovian jump linear systems.
First, we studied the optimization landscape of direct policy optimization for MJLS and identified a few cost properties such as coercivity, almost smoothness, and gradient dominance. 
Based on these properties, we derived global convergence guarantees for the policy gradient method, the Gauss-Newton method, and the natural policy gradient method. 
Finally, numerical results were provided to support the theoretical findings.

The policy optimization methods studied in this paper can also be implemented in a model-free manner, since various learning techniques can be used to estimate the policy gradient information from sampled trajectories. 
This will allow us to learn the optimal control of unknown MJLS without dealing with system identification.
The model-free implementations of our proposed policy optimization methods would be particularly useful for large
scale systems, where the computational complexity grows as the system size increases. 
Our theory suggests that such model-free implementations should also work as long as the gradient information is estimated with some reasonable accuracy.
An important future task is to rigorously investigate  the sample complexity of these 
data-driven policy learning methods on the MJLS LQR problem.


\bibliographystyle{IEEEtran}
\bibliography{ms}

\begin{thebibliography}{10}
\providecommand{\url}[1]{#1}
\csname url@samestyle\endcsname
\providecommand{\newblock}{\relax}
\providecommand{\bibinfo}[2]{#2}
\providecommand{\BIBentrySTDinterwordspacing}{\spaceskip=0pt\relax}
\providecommand{\BIBentryALTinterwordstretchfactor}{4}
\providecommand{\BIBentryALTinterwordspacing}{\spaceskip=\fontdimen2\font plus
\BIBentryALTinterwordstretchfactor\fontdimen3\font minus
  \fontdimen4\font\relax}
\providecommand{\BIBforeignlanguage}[2]{{%
\expandafter\ifx\csname l@#1\endcsname\relax
\typeout{** WARNING: IEEEtran.bst: No hyphenation pattern has been}%
\typeout{** loaded for the language `#1'. Using the pattern for}%
\typeout{** the default language instead.}%
\else
\language=\csname l@#1\endcsname
\fi
#2}}
\providecommand{\BIBdecl}{\relax}
\BIBdecl

\bibitem{sutton2018reinforcement}
R.~Sutton and A.~Barto, \emph{Reinforcement learning: An introduction}.\hskip
  1em plus 0.5em minus 0.4em\relax MIT press, 2018.

\bibitem{schulman2015high}
J.~Schulman, P.~Moritz, S.~Levine, M.~Jordan, and P.~Abbeel, ``High-dimensional
  continuous control using generalized advantage estimation,'' in
  \emph{International Conference on Learning Representation}, 2015.

\bibitem{levine2016end}
S.~Levine, C.~Finn, T.~Darrell, and P.~Abbeel, ``End-to-end training of deep
  visuomotor policies,'' \emph{The Journal of Machine Learning Research},
  vol.~17, no.~1, pp. 1334--1373, 2016.

\bibitem{duan2016benchmarking}
Y.~Duan, X.~Chen, R.~Houthooft, J.~Schulman, and P.~Abbeel, ``Benchmarking deep
  reinforcement learning for continuous control,'' in \emph{International
  Conference on Machine Learning}, 2016, pp. 1329--1338.

\bibitem{sutton2000policy}
R.~Sutton, D.~McAllester, S.~Singh, and Y.~Mansour, ``Policy gradient methods
  for reinforcement learning with function approximation,'' in \emph{Advances
  in neural information processing systems}, 2000, pp. 1057--1063.

\bibitem{kakade2002natural}
S.~Kakade, ``A natural policy gradient,'' in \emph{Advances in neural
  information processing systems}, 2002, pp. 1531--1538.

\bibitem{schulman2015trust}
J.~Schulman, S.~Levine, P.~Abbeel, M.~Jordan, and P.~Moritz, ``Trust region
  policy optimization,'' in \emph{International Conference on Machine
  Learning}, 2015, pp. 1889--1897.

\bibitem{peters2008natural}
J.~Peters and S.~Schaal, ``Natural actor-critic,'' \emph{Neurocomputing},
  vol.~71, no. 7-9, pp. 1180--1190, 2008.

\bibitem{schulman2017proximal}
J.~Schulman, F.~Wolski, P.~Dhariwal, A.~Radford, and O.~Klimov, ``Proximal
  policy optimization algorithms,'' \emph{arXiv preprint arXiv:1707.06347},
  2017.

\bibitem{henderson2018deep}
P.~Henderson, R.~Islam, P.~Bachman, J.~Pineau, D.~Precup, and D.~Meger, ``Deep
  reinforcement learning that matters,'' in \emph{Thirty-Second AAAI Conference
  on Artificial Intelligence}, 2018.

\bibitem{rajeswaran2017towards}
A.~Rajeswaran, K.~Lowrey, E.~Todorov, and S.~Kakade, ``Towards generalization
  and simplicity in continuous control,'' in \emph{Advances in Neural
  Information Processing Systems}, 2017, pp. 6550--6561.

\bibitem{pmlr-v80-fazel18a}
M.~Fazel, R.~Ge, S.~Kakade, and M.~Mesbahi, ``Global convergence of policy
  gradient methods for the linear quadratic regulator,'' in \emph{Proceedings
  of the 35th International Conference on Machine Learning}, vol.~80, 2018, pp.
  1467--1476.

\bibitem{bu2019lqr}
J.~Bu, A.~Mesbahi, M.~Fazel, and M.~Mesbahi, ``{LQR} through the lens of first
  order methods: Discrete-time case,'' \emph{arXiv preprint arXiv:1907.08921},
  2019.

\bibitem{malik2018derivative}
D.~Malik, A.~Pananjady, K.~Bhatia, K.~Khamaru, P.~Bartlett, and M.~Wainwright,
  ``Derivative-free methods for policy optimization: Guarantees for linear
  quadratic systems,'' \emph{arXiv preprint arXiv:1812.08305}, 2018.

\bibitem{tu2018gap}
S.~Tu and B.~Recht, ``The gap between model-based and model-free methods on the
  linear quadratic regulator: An asymptotic viewpoint,'' \emph{arXiv preprint
  arXiv:1812.03565}, 2018.

\bibitem{yang2019global}
Z.~Yang, Y.~Chen, M.~Hong, and Z.~Wang, ``On the global convergence of
  actor-critic: A case for linear quadratic regulator with ergodic cost,''
  \emph{arXiv preprint arXiv:1907.06246}, 2019.

\bibitem{Krauth2019}
K.~Krauth, S.~Tu, and B.~Recht, ``Finite-time analysis of approximate policy
  iteration for the linear quadratic regulator,'' in \emph{Advances in Neural
  Information Processing Systems}, 2019, pp. 8512--8522.

\bibitem{mohammadi2019convergence}
H.~Mohammadi, A.~Zare, M.~Soltanolkotabi, and M.~Jovanovi{\'c}, ``Convergence
  and sample complexity of gradient methods for the model-free linear quadratic
  regulator problem,'' \emph{arXiv preprint arXiv:1912.11899}, 2019.

\bibitem{mohammadi2019global}
H.~Mohammadi, A.~Zare, M.~Soltanolkotabi, and M.~Jovanovic, ``Global
  exponential convergence of gradient methods over the nonconvex landscape of
  the linear quadratic regulator,'' in \emph{2019 IEEE 58th Conference on
  Decision and Control (CDC)}, 2019.

\bibitem{fatkhullin2020optimizing}
I.~Fatkhullin and B.~Polyak, ``Optimizing static linear feedback: Gradient
  method,'' \emph{arXiv preprint arXiv:2004.09875}, 2020.

\bibitem{furieri2020learning}
L.~Furieri, Y.~Zheng, and M.~Kamgarpour, ``Learning the globally optimal
  distributed {LQ} regulator,'' in \emph{Learning for Dynamics and Control},
  2020, pp. 287--297.

\bibitem{li2019distributed}
Y.~Li, Y.~Tang, R.~Zhang, and N.~Li, ``Distributed reinforcement learning for
  decentralized linear quadratic control: A derivative-free policy optimization
  approach,'' \emph{arXiv preprint arXiv:1912.09135}, 2019.

\bibitem{zhang2019policyc}
K.~Zhang, B.~Hu, and T.~Ba{\c{s}}ar, ``Policy optimization for $\mathcal{H}_2$
  linear control with $\mathcal{H}_\infty$ robustness guarantee: Implicit
  regularization and global convergence,'' \emph{arXiv preprint
  arXiv:1910.09496}, 2019.

\bibitem{gravell2019learning}
B.~Gravell, P.~M. Esfahani, and T.~Summers, ``Learning robust control for {LQR}
  systems with multiplicative noise via policy gradient,'' \emph{arXiv preprint
  arXiv:1905.13547}, 2019.

\bibitem{zhang2020stability}
K.~Zhang, B.~Hu, and T.~Basar, ``On the stability and convergence of robust
  adversarial reinforcement learning: A case study on linear quadratic
  systems,'' \emph{Advances in Neural Information Processing Systems}, vol.~33,
  2020.

\bibitem{qu2020combining}
G.~Qu, C.~Yu, S.~Low, and A.~Wierman, ``Combining model-based and model-free
  methods for nonlinear control: A provably convergent policy gradient
  approach,'' \emph{arXiv preprint arXiv:2006.07476}, 2020.

\bibitem{costa2006discrete}
O.~Costa, M.~Fragoso, and R.~Marques, \emph{Discrete-time {M}arkov jump linear
  systems}.\hskip 1em plus 0.5em minus 0.4em\relax Springer London, 2006.

\bibitem{bar1993estimation}
Y.~Bar-Shalom and X.~Li, ``Estimation and tracking- principles, techniques, and
  software,'' \emph{Norwood, MA: Artech House, Inc, 1993.}, 1993.

\bibitem{fox2011tsp}
E.~Fox, E.~S.~M. Jordan, and A.~Willsky, ``Bayesian nonparametric inference of
  switching dynamic linear models,'' \emph{IEEE Transactions on Signal
  Processing}, vol.~59, no.~4, pp. 1569 -- 1585, 2011.

\bibitem{hamsa2016cdc}
K.~Gopalakrishnan, H.~Balakrishnan, and R.~Jordan, ``Stability of networked
  systems with switching topologies,'' in \emph{IEEE Conference on Decision and
  Control}, 2016, pp. 1889--1897.

\bibitem{Pavlovic2000LearningSL}
V.~Pavlovic, J.~Rehg, and J.~MacCormick, ``Learning switching linear models of
  human motion,'' in \emph{Advances in Neural Information Processing Systems},
  2000.

\bibitem{sworder1999estimation}
D.~Sworder and J.~Boyd, \emph{Estimation problems in hybrid systems}.\hskip 1em
  plus 0.5em minus 0.4em\relax Cambridge University Press, 1999.

\bibitem{varga2013ijrnc}
A.~N. Vargas, E.~F. Costa, and J.~B.~R. do~Val, ``On the control of {M}arkov
  jump linear systems with no mode observation: {A}pplication to a {DC} motor
  device,'' \emph{International Journal of Robust and Nonlinear Control},
  vol.~23, no.~10, pp. 1136--1150, 2013.

\bibitem{hu2017unified}
B.~Hu, P.~Seiler, and A.~Rantzer, ``A unified analysis of stochastic
  optimization methods using jump system theory and quadratic constraints,'' in
  \emph{Conference on Learning Theory}, 2017, pp. 1157--1189.

\bibitem{hu2019characterizing}
B.~Hu and U.~Syed, ``Characterizing the exact behaviors of temporal difference
  learning algorithms using {M}arkov jump linear system theory,'' in
  \emph{Advances in Neural Information Processing Systems}, 2019, pp.
  8477--8488.

\bibitem{costa2002monte}
O.~L. Costa and J.~C. Aya, ``Monte {C}arlo {TD} ($\lambda$)-methods for the
  optimal control of discrete-time {M}arkovian jump linear systems,''
  \emph{Automatica}, vol.~38, no.~2, pp. 217--225, 2002.

\bibitem{beirigo2018online}
R.~L. Beirigo, M.~G. Todorov, and A.~d. M.~S. Barreto, ``Online {TD}
  ($\lambda$) for discrete-time {M}arkov jump linear systems,'' in \emph{2018
  IEEE Conference on Decision and Control (CDC)}, 2018, pp. 2229--2234.

\bibitem{9029946}
M.~{Schuurmans}, P.~{Sopasakis}, and P.~{Patrinos}, ``Safe learning-based
  control of stochastic jump linear systems: a distributionally robust
  approach,'' in \emph{2019 IEEE 58th Conference on Decision and Control
  (CDC)}, 2019, pp. 6498--6503.

\bibitem{vargas2015gradient}
A.~N. Vargas, D.~C. Bortolin, E.~F. Costa, and J.~B. do~Val, ``Gradient-based
  optimization techniques for the design of static controllers for {M}arkov
  jump linear systems with unobservable modes,'' \emph{International Journal of
  Numerical Modelling: Electronic Networks, Devices and Fields}, vol.~28,
  no.~3, pp. 239--253, 2015.

\bibitem{joaoACC}
J.~P. {Jansch-Porto}, B.~{Hu}, and G.~E. {Dullerud}, ``Convergence guarantees
  of policy optimization methods for {M}arkovian jump linear systems,'' in
  \emph{2020 American Control Conference (ACC)}, 2020, pp. 2882--2887.

\bibitem{abraham2012manifolds}
R.~Abraham, J.~E. Marsden, and T.~Ratiu, \emph{Manifolds, tensor analysis, and
  applications}.\hskip 1em plus 0.5em minus 0.4em\relax Springer Science \&
  Business Media, 2012, vol.~75.

\bibitem{fragoso}
M.~Fragoso, ``Discrete-time jump {LQG} problem,'' \emph{International Journal
  of Systems Science}, vol.~20, no.~12, pp. 2539--2545, 1989.

\bibitem{Bittanti1991}
S.~Bittanti, P.~Colaneri, and G.~De~Nicolao, \emph{The Periodic Riccati
  Equation}.\hskip 1em plus 0.5em minus 0.4em\relax Berlin, Heidelberg:
  Springer Berlin Heidelberg, 1991, pp. 127--162.

\bibitem{293179}
J.~J. {Hench} and A.~J. {Laub}, ``Numerical solution of the discrete-time
  periodic {R}iccati equation,'' \emph{IEEE Transactions on Automatic Control},
  vol.~39, no.~6, pp. 1197--1210, 1994.

\bibitem{maartensson2009gradient}
K.~M{\aa}rtensson and A.~Rantzer, ``Gradient methods for iterative distributed
  control synthesis,'' in \emph{Proceedings of the 48h IEEE Conference on
  Decision and Control (CDC) held jointly with 2009 28th Chinese Control
  Conference}, 2009, pp. 549--554.

\bibitem{rautert1997computational}
T.~Rautert and E.~Sachs, ``Computational design of optimal output feedback
  controllers,'' \emph{SIAM Journal on Optimization}, vol.~7, no.~3, pp.
  837--852, 1997.

\bibitem{dattorro2010convex}
J.~Dattorro, \emph{Convex optimization \& Euclidean distance geometry}.\hskip
  1em plus 0.5em minus 0.4em\relax Lulu. com, 2010.

\bibitem{bauschke2011convex}
H.~H. Bauschke, P.~L. Combettes \emph{et~al.}, \emph{Convex analysis and
  monotone operator theory in Hilbert spaces}.\hskip 1em plus 0.5em minus
  0.4em\relax Springer, 2011, vol. 408.

\bibitem{conn2009introduction}
A.~Conn, K.~Scheinberg, and L.~Vicente, \emph{Introduction to derivative-free
  optimization}.\hskip 1em plus 0.5em minus 0.4em\relax Siam, 2009, vol.~8.

\bibitem{nesterov2017random}
Y.~Nesterov and V.~Spokoiny, ``Random gradient-free minimization of convex
  functions,'' \emph{Foundations of Computational Mathematics}, vol.~17, no.~2,
  pp. 527--566, 2017.

\bibitem{janschporto2020policy}
J.~P. Jansch-Porto, B.~Hu, and G.~Dullerud, ``Policy learning of {MDP}s with
  mixed continuous/discrete variables: A case study on model-free control of
  {M}arkovian jump systems,'' in \emph{Proceedings of the 2nd Conference on
  Learning for Dynamics and Control}, 2020, pp. 947--957.

\end{thebibliography}

\appendix

\numberwithin{equation}{subsection}

\subsection{Proof of the Bound \eqref{eq:q1}}

 The proof of \eqref{eq:q1} is straightforward. 
 Notice that we have
\begin{align*}
&\inner{(R+B^{\cT} \mult \E{P^K} \mult B)\mult E \mult \X^K}{E} \\
&\qquad  = \sumomega \tr{E_i^T (R_i+B_i^T\Ei{P^K} B_i)E_i \X_i^K} \\
&\qquad  \leq \sumomega  \|E_i^T(R_i+B_i^T\Ei{P^K} B_i)E_i\| \tr{\X_i^K}\\
&\qquad  \leq \sumomega \|E_i \|^2 \|R_i+B_i^T\Ei{P^K} B_i\| \tr{\X_i^K} \\
&\qquad  \leq \|E\|^2_{\max} \|R + B^{\cT} \mult \E{P^K} \mult B\|_{\max} \sumomega \tr{\X_i^K}\\
&\qquad  \leq \|E\|^2_2  \left(\|R\|_{\max} + \|B\|^2_{\max} \| P^K\|_{\max} \right) \sumomega \tr{\X_i^K}
\end{align*}
where the last step follows from \eqref{eq:Ebound}.
Hence we immediately have 
\begin{align}
\label{eq:A1}
q_1\le \left(\|R\|_{\max} + \|B\|^2_{\max} \| P^K\|_{\max} \right) \sumomega \tr{\X_i^K}.
\end{align}
Now what we need to bound $\| P^K\|_{\max}$ and $\sumomega \tr{\X_i^K}$. Recall $C(K) = \Expx{\tr{\left( \sum_{i\in \Omega} \pi_i P_i^{K} \right) x_0 x_0^T}}$.
Therefore, we have
\begin{align*}
C(K) 	&\geq \tr{ \sum_{i\in \Omega} \pi_i P_i^{K} } \sigma_{\min}\!\left({\Expx{x_0x_0^T}}\right)\\
	&\geq  \left( \sum_{i\in \Omega} \tr{P_i^{K}} \right)  \min_{i\in\Omega}(\pi_i) \sigma_{\min}\!\left({\Expx{x_0x_0^T}}\right)
\end{align*}
which leads to the following upper bound 
\begin{align}\label{eq:PKC}
    \| P^K\|_{\max}\le \sum_{i\in \Omega} \tr{P_i^{K}} \le \frac{C(K)}{\mu}.
\end{align}
Notice $\mathcal{T}$ is the adjoint operator of  $\mathcal{L}$. Hence we also have
\begin{align*}
C(K) = \inner{P^K}{X(0)} &= \inner{\sumtinf \Lt{Q+K^{\cT} \mult R \mult K}}{X(0)} \\
    &= \inner{Q+K^{\cT} \mult R \mult K}{\sumtinf \Tt{X(0)}} \\
    &= \inner{Q+K^{\cT} \mult R \mult K}{\X^K} \\
    &= \sum_{i\in\Omega} \tr{(Q_i + K_i^T R_i K_i) \X^K_i} \\
    &\geq \sum_{i\in\Omega} \sigma_{\min}(Q_i) \tr{\X^K_i} \\
    &\geq \Lambda_{\min}(Q) \sum_{i\in\Omega} \tr{\X^K_i},
\end{align*}
which leads to another useful bound 
\begin{align}\label{eq:XKC}
\sum_{i\in\Omega} \tr{\X^K_i}\le \frac{C(K)}{\Lambda_{\min}(Q)}.
\end{align} 
Substituting \eqref{eq:PKC} and \eqref{eq:XKC} into \eqref{eq:A1} leads to \eqref{eq:q1}.
\qed

\subsection{Proof of the Bound \eqref{eq:q2}}
For simplicity, we shorten the notation  $(P^K)'[E]$ as $(P^K)'$.
To prove \eqref{eq:q2},  first notice that we can use the Cauchy-Schwarz inequality to show
\begin{align*}
&\inner{B^{\cT} \mult \E{(P^K)'} \mult \varGamma \mult \X^K}{E} \\
&\qquad\qquad = \inner{E^{\cT}\mult B^{\cT}\mult \E{(P^K)'}\mult \varGamma\mult (\X^K)^{\mult 1/2}}{(\X^K)^{\mult 1/2}} \\
&\qquad\qquad \leq \|E^{\cT}\mult B^{\cT}\mult \E{(P^K)'}\mult \varGamma\mult (\X^K)^{\mult 1/2} \|_2  \|(\X^K)^{\mult 1/2}\|_2.
\end{align*}
Next, we bound $\|E^{\cT}\mult B^{\cT}\mult \E{(P^K)'}\mult \varGamma\mult (\X^K)^{\mult 1/2} \|_2$ as follows
\begin{align}\label{eq:B1}
\begin{split}
&\|E^{\cT} \mult B^{\cT} \mult \E{(P^K)'} \mult \varGamma \mult (\X^K)^{\mult 1/2} \|_2^2 \\
 &= \sumomega\tr{\Ei{(P^K)'} B_i E_i E_i^T B_i^T \Ei{(P^K)'} \varGamma_i\X^K_i \varGamma_i^T} \\
 &= \sumomega \|B_i\|^2 \|E_i\|^2 \tr{\Ei{(P^K)'} \Ei{(P^K)'} \varGamma_i\X^K_i \varGamma_i^T} \\
 &\leq \|B\|^2_{\max} \|E\|^2_{\max} \sumomega  \tr{\Ei{(P^K)'} \Ei{(P^K)'} \varGamma_i\X^K_i \varGamma_i^T} \\
 &\leq \|B\|^2_{\max} \|E\|^2_2 \sumomega  \tr{\Ei{(P^K)'} \Ei{(P^K)'} \varGamma_i\X^K_i \varGamma_i^T}
\end{split}
\end{align}

Since $\Ei{(P^K)'} \Ei{(P^K)'}$ is positive semidefinite, we have
\begin{align*}
 &\sumomega  \tr{\Ei{(P^K)'} \Ei{(P^K)'} \varGamma_i\X^K_i \varGamma_i^T}\\
   &\qquad\qquad\qquad\qquad\qquad \le \sumomega \|\Ei{(P^K)'}\|^2 \tr{ \varGamma_i\X^K_i \varGamma_i^T}\\
   &\qquad\qquad\qquad\qquad\qquad \le \|(P^K)'\|_{\max}^2 \sumomega \tr{\varGamma_i\X^K_i \varGamma_i^T}
\end{align*}
If $K\in \K$, we know $\T{\X^K} - \X^K \prec 0$ and hence the following also holds
\begin{align*}
  \sumomega \tr{\varGamma_i\X^K_i \varGamma_i^T}&=\sum_{j\in\Omega} \tr{ \sumomega p_{ij}  \varGamma_i\X^K_i \varGamma_i^T } \\
  & \le \sum_{j\in\Omega} \tr{ \mathcal{T}_j \left( \X^K \right) }\le \sum_{j\in\Omega}\tr{ \X^K_j }
\end{align*}
Therefore, substituting the above bounds into~\eqref{eq:B1} leads to
\begin{align*}
&\|E^{\cT} \mult B^{\cT} \mult \E{(P^K)'} \mult \varGamma \mult (\X^K)^{\mult 1/2} \|_2^2 \\
&\qquad\qquad\qquad\quad \leq\|B\|^2_{\max} \|E\|^2_2 \|(P^K)'\|_{\max}^2 \sum_{j\in\Omega}\tr{ \X^K_j }
\end{align*}
Since $\|(\X^K)^{\mult 1/2}\|_2^2=\sum_{j\in\Omega}\tr{ \X^K_j }$, we finally have
\begin{align}\label{eq:B2}
\begin{split}
&\inner{B^{\cT} \mult \E{(P^K)'} \mult \varGamma \mult \X^K}{E}\\ 
&\qquad\qquad \le \|B\|_{\max} \|E\|_2 \|(P^K)'\|_{\max} \sum_{j\in\Omega}\tr{ \X^K_j }
\end{split}
\end{align}
Based on \eqref{eq:B2}, proving \eqref{eq:q2} only requires showing that the following bound holds for any $\|E\|_2 = 1$ and $K\in \K$,
\begin{align}\label{eq:B3}
\|(P^K)'[E]\|_{\max} \leq  \xi \|P^K\|_{\max},
\end{align}
where $\xi$ is given as
\begin{equation*}
\xi = \frac{1}{\Lambda_{\min}(Q)} \left( \frac{1 + \|B\|_{\max}^2}{\mu} C(K) + \|R\|_{\max} \right)-1.
\end{equation*}
Once \eqref{eq:B3} is proved, it can be combined with \eqref{eq:B2}, \eqref{eq:PKC}, and \eqref{eq:XKC} to verify \eqref{eq:q2} easily.

Now the only remaining task is to prove \eqref{eq:B3}.
Let us first show $(P^K)'[E] \leq \xi P^K$ given $\|E\|_2 = 1$.
We will use Corollary 2.7 in~\cite{costa2006discrete} which states  that  $\tilde{X} \succeq X$ if $(X,\tilde{X})$ satisfy $X - \mathcal{L}(X) = S$ and $\tilde{X} - \mathcal{L}(\tilde{X}) = \tilde{S}$ with $\tilde{S} \succeq S$ and $K\in\K$.
Since $\E{P^K}\succ 0$ and $R\succ 0$, we have
\begin{align*}
&(P^K)'[E] - \mathcal{L}((P^K)'[E]) \\
=& (-B \mult E)^{\cT}\mult \E{P^K}\mult\varGamma \!+ \varGamma^{\cT} \mult \E{P^K} \mult (-B\mult E)\!  + E^{\cT} \mult R\mult K\! +\! K^{\cT} \mult R \mult E\\
 \preceq & \,\mathcal{L}(P^K) + (B\mult E)^{\cT}\mult \E{P^K} \mult B\mult E 
 +K^{\cT}\mult R\mult K + E^{\cT}\mult R\mult E \\
 = & P^K\! - Q + (B\mult E)^{\cT}\mult \E{P^K}\mult B\mult E + E^{\cT}\mult R\mult E \\
 \eqqcolon &  W
\end{align*}
If we can show $W \preceq \xi (Q+K^T R K)$, then Corollary 2.7 in~\cite{costa2006discrete} can be directly applied to show $(P^K)'[E] \leq \xi P^K$.
Note that we have the following upper bound,
\begin{align*}
&\| P^K_i + E_i^T B_i^T \Ei{P^K} B_i E_i \| \\
&\qquad\qquad\qquad\quad\; \leq  \| P^K_i \| + \|E_i^T B_i^T \Ei{P^K} B_i E_i\| \\
&\qquad\qquad\qquad\quad\; \leq \| P^K \|_{\max} + \|E\|^2_{\max} \|B\|_{\max}^2 \|P^K\|_{\max} \\
&\qquad\qquad\qquad\quad\; \leq (1 + \|B\|^2_{\max}) \frac{C(K)}{\mu}
\end{align*}
which directly leads to the following result
\begin{align*}
W &\preceq \left( (1 + \|B\|_{\max}^2) \frac{C(K)}{\mu} + \|R\|_{\max} \right) \mathcal{I} - Q \\
 &\preceq \frac{1}{\Lambda_{\min}(Q)} \left( (1 + \|B\|_{\max}^2) \frac{C(K)}{\mu} + \|R\|_{\max} \right) Q-Q\\
 &= \xi Q
\end{align*}
where $\mathcal{I} \coloneqq (I, \ldots, I)\in \M_{d\times d}^{N_s}$
Notice the bound makes sense since we know $C(K)\ge \Lambda_{\min}(Q) \mu$.
Therefore, we have $(P^K)'[E] \preceq \xi P^K$. This directly leads to \eqref{eq:B3}. Now we can complete the proof by combining \eqref{eq:B3}, \eqref{eq:B2}, \eqref{eq:PKC}, and \eqref{eq:XKC}.
\qed

\subsection{ODE Limits and Convergence}

The continuous-time ODEs are typically easier to analyze since the smoothness conditions are not required.
To gain some insight, we briefly discuss the convergence behaviors of the ODE limits of the policy gradient method, the Gauss-Newton method, and the natural policy gradient method for the MJLS LQR problem.
The ODE limit of the policy gradient method is just the so-called gradient flow:
\begin{align}\label{eq:gd_flow}
    \dot{K}(t)=-\nabla C(K).
\end{align}
The ODE limit of the Gauss-Newton method is defined as
\begin{equation}\label{eq:gn_flow}
\dot{K}(t) = -2(R + B^{\cT}\mult \E{P^K}\mult B)^{\cinv} \mult L^K 
\end{equation}
The ODE limit of the natural policy gradient method is defined as
\begin{equation}\label{eq:npg_flow}
\dot{K}(t) = -\nabla C(K)\mult (\X^{K})^{\cinv}
\end{equation}
Due to continuity, it is obvious that the above three ODEs are well posed. Based on the coercivity condition and the gradient dominance condition stated in Lemma~\ref{lemma:coercive}, we can show the following result.


\begin{thm}\label{thm:gn_flow_conv}
For $K^0\in\K$, the trajectories of the ODEs  \eqref{eq:gd_flow}, \eqref{eq:gn_flow}, and \eqref{eq:npg_flow} are all guaranteed to converge to $K^*$ exponentially. 
\end{thm}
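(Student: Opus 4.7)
\begin{myprf}
The plan is to show that each of the three ODE flows satisfies a differential inequality of the form $\frac{d}{dt}\bigl(C(K(t))-C(K^*)\bigr)\le -c\bigl(C(K(t))-C(K^*)\bigr)$ for some positive constant $c$, after which exponential convergence follows from Gr\"onwall's inequality. Because the cost is non-increasing along each flow, coercivity (Statement 1 in Lemma~\ref{lemma:coercive}) will ensure that trajectories never leave the compact sublevel set $\K_{C(K^0)}\subset \K$, so global existence and feasibility come for free and no projection step is needed.

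First I would verify well-posedness. Each right-hand side is a continuous (in fact smooth) function of $K$ on the open set $\K$, so local existence is clear. A direct computation gives $\dot C(K(t))=\langle \nabla C(K),\dot K\rangle$, and substituting each of the three flows yields a non-positive quantity. This shows $C(K(t))\le C(K^0)$ for all $t$ in the maximal interval of existence, so $K(t)\in \K_{C(K^0)}$, which is compact by Statement 4 in Lemma~\ref{lemma:coercive}. Consequently the trajectory cannot blow up or escape to $\partial \K$, and the ODE is defined for all $t\ge 0$.

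Next I would compute the dissipation rate for each flow and combine it with the gradient dominance bound (Statement 3 in Lemma~\ref{lemma:coercive}). For the gradient flow, $\dot C=-\|\nabla C(K)\|_2^2$, and gradient dominance in the form $C(K)-C(K^*)\le \frac{\|\X^{K^*}\|_{\max}}{4\mu^2\Lambda_{\min}(R)}\|\nabla C(K)\|_2^2$ yields decay rate $c_{\mathrm{GD}}=4\mu^2\Lambda_{\min}(R)/\|\X^{K^*}\|_{\max}$. For the Gauss--Newton flow, using $\nabla C(K)=2L^K\mult \X^K$ and $\Lambda_{\min}(\X^K)\ge \mu$ gives $\dot C=-4\langle \Psi^{\cinv}\mult L^K, L^K\mult \X^K\rangle\le -4\mu\langle \Psi^{\cinv}\mult L^K, L^K\rangle$, and then the middle inequality in gradient dominance, $C(K)-C(K^*)\le \|\X^{K^*}\|_{\max}\langle \Psi^{\cinv}\mult L^K, L^K\rangle$, yields rate $c_{\mathrm{GN}}=4\mu/\|\X^{K^*}\|_{\max}$. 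For the natural policy gradient flow, $\dot C=-4\langle L^K, L^K\mult \X^K\rangle\le -4\mu\|L^K\|_2^2$, and the third line of gradient dominance gives rate $c_{\mathrm{NPG}}=4\mu\Lambda_{\min}(R)/\|\X^{K^*}\|_{\max}$.

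Applying Gr\"onwall's inequality to each differential inequality then delivers $C(K(t))-C(K^*)\le e^{-ct}\bigl(C(K^0)-C(K^*)\bigr)$ with the respective constant $c$. To translate cost convergence into parameter convergence, I would invoke coercivity once more: since $C(K(t))\to C(K^*)$ and $K^*$ is the unique stationary point on $\K$, any limit point of $K(t)$ must equal $K^*$, and compactness of the sublevel set plus uniqueness forces $K(t)\to K^*$. I do not anticipate a major obstacle here: the only subtle point is confirming that trajectories stay inside $\K$, and this is cleanly handled by coercivity together with the monotone decrease of $C$ along each flow.
\end{myprf}
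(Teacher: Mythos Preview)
Your proposal is correct and follows essentially the same approach as the paper: define $V(K)=C(K)-C(K^*)$, compute $\dot V$ along each flow, and combine with the appropriate gradient dominance inequality from Lemma~\ref{lemma:coercive} to obtain $\dot V\le -cV$ and hence exponential decay. The paper's own proof is deliberately terse (it says ``the proof is quite standard and the details are omitted'') and only sketches the gradient-flow case, whereas you have fleshed out all three flows and the well-posedness argument, but the underlying strategy is identical.
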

\begin{proof}
The proof is quite standard and the details are omitted. For illustrative purposes, we present a few more steps for the gradient flow case.
Denote by $K^t$ the solution of~\eqref{eq:gd_flow}.
Define a Lyapunov function $V(K^t) \coloneqq C(K^t) - C(K^*)$.
Then we can use the gradient dominance condition to show $V(K^t) \leq \frac{1}{\alpha} \|\nabla C(K^t)\|_2^2$ with $\alpha \coloneqq 4\frac{\mu^2\Lambda_{\min}(R)}{\| \X^{K^*} \|_{\max}}$.
It immediately follows that $\dot{V}(K^t) = - \inner{\nabla C(K^t)}{\nabla C(K^t)} \leq -\alpha V(K^t)$.
Hence $V(K^t)$ converges exponentially to $0$.
\end{proof}

\end{document}